\documentclass[12pt,twoside,reqno]{amsart}
\usepackage{amsmath, amsthm, amscd, amsfonts, amssymb, graphicx, color}
\usepackage[utf8]{inputenc}
\usepackage{cleveref}
\crefname{section}{§}{§§}
\Crefname{section}{§}{§§}
\let \oldsection
\section
\renewcommand{\section}{\vspace{8pt plus 4pt}\oldsection}

\newcommand{\beqa}{\begin{eqnarray*}}
\newcommand{\eeqa}{\end{eqnarray*}}
\newcommand{\beqn}{\begin{eqnarray}}
\newcommand{\eeqn}{\end{eqnarray}}

\newcommand{\R}{\mathbb R}

\newcommand{\M}{\mathbb M}

\newcommand{\mcH}{\mathcal H}
\newcommand{\mcB}{\mathcal B}

\newcommand{\m}{\mu}

\usepackage{lipsum}
\usepackage[super]{nth}
\usepackage{xcolor}
\usepackage{tikz}
\usepackage{pgfplots}
\usepackage{mathrsfs}
\usepackage{graphics}
\usepackage{graphicx} 
\usepackage{epsfig}
\usepackage{wrapfig}
\definecolor{olive}{rgb}{0.3, 0.4, .1}
\definecolor{fore}{RGB}{249,242,215}
\definecolor{back}{RGB}{51,51,51}
\definecolor{title}{RGB}{255,0,90}
\definecolor{dgreen}{rgb}{0.,0.6,0.}
\definecolor{gold}{rgb}{1.,0.84,0.}
\definecolor{JungleGreen}{cmyk}{0.99,0,0.52,0}
\definecolor{BlueGreen}{cmyk}{0.85,0,0.33,0}
\definecolor{RawSienna}{cmyk}{0,0.72,1,0.45}
\definecolor{Magenta}{cmyk}{0,1,0,0}

\setcounter{page}{1}
\newtheorem{thm}{Theorem}[section]

\newtheorem{corollary}[thm]{Corollary}
\newtheorem{lemma}[thm]{Lemma}
\newtheorem{prop}[thm]{Proposition}

\theoremstyle{definition}
\newtheorem{defn}{Definition}[section]

\theoremstyle{remark}

\newtheorem{example}{Example}[section]

\numberwithin{equation}{section}
\linespread{1.2}
\setlength{\paperwidth}{210mm}
\setlength{\paperheight}{290mm}
\setlength{\oddsidemargin}{0mm}
\setlength{\evensidemargin}{0mm}
\setlength{\topmargin}{-20mm}
\setlength{\headheight}{10mm}
\setlength{\headsep}{13mm}
\setlength{\textwidth}{160mm}
\setlength{\textheight}{228mm}
\setlength{\footskip}{15mm}
\setlength{\marginparwidth}{0mm}
\setlength{\marginparsep}{0mm}
\begin{document}
\begin{center}\Large{{\bf{ Henstock-Orlicz space and its dense space}}} \\
 
    Bipan Hazarika$^{1, \ast}$ and Hemanta Kalita$^{2}$ \\
$^{1}$Department of Mathematics, Gauhati University, Guwahati 781014, Assam, India\\
E-mail: bh\_rgu@yahoo.co.in; bh\_gu@gauhati.ac.in\\
$^{2}$Department of Mathematics, Patkai Christian College (Autonomous), Dimapur, Patkai 797103, Nagaland, India\\
Email:    hemanta30kalita@gmail.com
\end{center}
\title{}
\author{}
\thanks{$^{\ast}$The corresponding author}
\thanks{\today} 
\begin{abstract}
The motivation of the article is to introduce Henstock-Orlicz space with non-absolute integrable functions. We prove $ C_{0}^{\infty} $ is dense in the Henstock-Orlicz space, which is not dense in the classical Orlicz space.
\\
\noindent{\footnotesize {\bf{Keywords and phrases:}}} Orlicz space; Henstock-Kurzweil integrable function,  Henstock-Orlicz class; Henstock-Orlicz space; Symmetric space; Dense space. \\
{\footnotesize {\bf{AMS subject classification \textrm{(2010)}:}}} 26A39, 46B03, 46B20, 46B25, 46T12.
\end{abstract}
\maketitle

\maketitle

\pagestyle{myheadings}
\markboth{\rightline {\scriptsize   Hazarika, Kalita}}
        {\leftline{\scriptsize Henstock-Orlicz space and its dense space }}

\maketitle

    \section{Introduction and Preliminaries}
    The  Lebesgue integration theory was developed in the year 1904 by Hendri Lebesgue. This theory and the function $x^p$ in the definition of $L^p$ space inspired by Z.W. Birnbaum and W. Orlicz to proposed a generalized space of $L^p,$  later on it was known as  Orlicz space. This space was later  developed by Orlicz himself.       The fundamental properties of Orlicz space with Lebesgue measure found in \cite{Ma}. In \cite{Mm}, Rao and Ren discussed  the theory of Orlicz space which is a more generalized version of $L^p$-space with the help of Young functions and the underlying measure. The basic ideas of the proofs of the theorems of Orlicz space were analogues of the basic results on $ L^p$-space.        In   \cite{Tv}, Thung presented a translation invariant subspace $ L_1(\R^n) \cap L_{\phi}(\R^n) $ to be dense in Orlicz space $ L_{\phi}(\R^n) $ by over coming difficulties as $ C_{0}^{\infty}(\R^n)$ is dense in $ L^p(\R^n) $ but not generally dense in $ L_{\phi}(\R^n). $ On the other hand,  Donaldson and Trudinger \cite{Th} proved that $ C_{0}^{\infty}(\R^n) $ is dense in Orlicz-Sobolev space.        Henstock integral was first developed by R. Henstock and J. Kurzweil independently, during 1957-1958 from Riemann integral with the concept of tagged partitions and gauge functions. Henstock-Kurzweil integral (we write Henstock integral) is a kind of non-absolute integral and contain Lebesgue integral (see \cite{Rh,Rh1,Kurzweil,Cs}).\\
       Based on the articles \cite{Th,Ra,Rh,Cs,Bs},  we introduce Henstock-Orlicz  (shortly H-Orlicz) space with the concept of Henstock integrable functions in place of bounded measurable functions with compact support. Throughout, the paper our functions are of Lebesgue measurable. It is known that,  if $f$ is  bounded  with compact support, then following are equivalent:
       \begin{enumerate}
       	\item[(a)] $f$ is Henstock–Kurzweil integrable, 
       	\item[(b)]	$f$ is Lebesgue integrable,
       	\item[(c)] 	$f$ is Lebesgue measurable.
       \end{enumerate}
       In general, every Henstock–Kurzweil integrable function is measurable, and $f$ is Lebesgue integrable if and only if both $f$ and $|f|$ are Henstock–Kurzweil integrable. This means that the Henstock–Kurzweil integral can be thought of as a "non-absolutely convergent version of Lebesgue integral". So, our space is equivalent to the classical Orlicz space. The Orlicz space is motivated us to construct the H-Orlicz space. Also, we note some important difference such as  $ C_{0}^{\infty}(\R^n) $ is dense in H-Orlicz space. \\    
        Recall, a tagged partition of an interval $I=[a,b]$ is a finite set of ordered pairs $D=\{(t_i, I_i)~: 1 \leq i \leq m \},$ where $\{I_i~: 1 \leq i \leq m \}$ is a partition of $I$ consisting of closed non-overlapping subintervals and $t_i$ is a point in $I_i;~t_i$ is called the tag associated with $I_i.$ If $f:I \to \R,$ the Riemann sum of $f$ with respect to $D$ is defined as
        \begin{equation}\label{eq01}
        S(f,D) = \sum_{i=1}^{m} f(t_i)l(I_i),
        \end{equation}
        where $l(I_i) $ is the length of the subinterval $I_i.$\\
        If $\delta: I\to  (0,\infty )$ is a positive function, we define an open interval valued
        function on $I$ by setting  $\gamma(t) = (t-\delta(t),t + \delta(t)).$ If $I_i = [x_i,x_{i+1}],$ and $t_i\in I_i, 1\le i\le m,$ we can rewrite 
        \begin{equation}\label{eq02}
        t_i\in I_i\subset \gamma(t_i).
        \end{equation} 
        A function $\gamma$ defined on  $I$ such that  $\gamma(f)$ is an open interval containing $t$
        for each $t\in I$ is called a gauge on $I.$ If $D= \{(t_i, I_i) : 1 \le i \le m\} $ is a tagged
        partition of $I$ and $\gamma$ is a gauge on $I,$ we say that $D$ is $\delta$-fine if (\ref{eq02}) is satisfied.         i.e. if $D$ is $\delta$-fine, we write $D<< \delta.$ 
   \begin{defn}\cite[Definition 2]{Cs} 
   A function $ f:[a,b] \to \mathbb{R} $ is said to be  Henstock integrable over $[a,b]$ if there exists $ A \in \mathbb{R} $  such that for every  $ \epsilon > 0 $ there exists a gauge $\delta: [a,b] \to \mathbb{R} $ such that $|S(f,D)-A| < \epsilon $ whenever $D$  is a $\delta$-fine  tagged partition of $[a,b].$\\
   Or A function $ f:[a,b] \to \mathbb{R} $ is Henstock integrable if there exists a function $ F:[a,b] \to \mathbb{R} $  such that  for every $ \epsilon > 0 $ there exists a function $ \delta(t) > 0 $ such that for any $ \delta$-fine partition $ D=\{[u,v] ,t \} $ of $I=[a,b], $ we have
    $$\left\| \sum[f(t)(v-u)]-F(u,v)\right\| < \epsilon, $$ 
    where the sum $ \sum $  runs  over $ D= \{ ([u,v], t) \}$ and $F(u,v)= F(v)-F(u).$     We write $ H\int_{I} f=F(I).$
   \end{defn}
   Throughout the article, we assume an abstract measure space $(\mathcal{B} , \Sigma, \mu_{\infty}),$ where $\mathcal{B}$ is some set of points and $\Sigma $ is an $\sigma$-algebra of its subsets on which a $\sigma$-additive function $ \mu_{\infty}: \Sigma \to \R^+ $ is given. Also we assume that the measure $\mu_{\infty}$ is the Lebesgue measure. We say that this measure space has the finite subset property if for every $E \in \Sigma$ with $\m_\infty(E) = \infty$ there exists a family of subsets $\{E_i\}_{i=1}^{\infty} \subset \Sigma $ with $E_i \subset E;~0 < \m_\infty(E_i) < \infty$ and $\m_\infty\left(\bigcup\limits_{i=1}^{\infty}E_i\right) = \infty.$\\
   This give us 
   \[\mu_{\infty}(E)=\left\{\begin{array}{c} 
   0 ,\mbox{ if } E = \emptyset,\\
   +\infty,\mbox{ if } E\neq \emptyset \end{array}\right .\]
   Otherwise it does not restrict the generality of our assumption.
        We denote $ H(\m_\infty) $ be the space of all Henstock integrable functions defined on $ \mcB.$  In  \cite{Ra,Cs,Bs}, authors proved that $ H(\m_\infty)$ is a vector space under the usual operations of pointwise addition and scalar multiplication on $\R.$ In the one-dimensional case, Alexiewicz \cite{AL} has shown that the class  of Henstock  integrable functions, with respect to the  norm defined  in the following manner: 
         for $f \in H(\R)$, define $\left\| f \right\|_H$ by
\[
\left\| f \right\|_H  = \sup_s \left| {\int_{ - \infty }^s {f(r)d(r)} } \right|.
\]
 is a normed space, and it is known that $H(\R)$ is not complete (see \cite{AL}). Gill and Zachary \cite{Tl} introduced  two new classes of Banach space of Henstock integrable functions. 
         \\
    {\bf Henstock-integral on $\R^n$}: Let $\mathbb{R}^n$ be $n$-dimensional Euclidean space. A typical element of $\mathbb{R}^n$ will be denoted by $x=(x_1,x_2,\dots,x_n).$ An interval in $\mathbb{R}^n$ is a set of the form $J=[\mathbf{a},\mathbf{b}] := \prod_{i=1}^n[a_i, b_i],$ where
    $-\infty < a_i < b_i <\infty $ for $i = 1,2\dots,n.$. The set $\prod_i^n[a_i, b_i]\subset \mathbb{ R}^n$ is known as a degenerate interval if $a_i =b_i$ for some $i\in \{1,2,\dots,n\}.$ Two intervals $J=[\mathbf{a},\mathbf{b}], K= [\mathbf{u}, \mathbf{v}] $ in $\mathbb{R}^n$ are said to be non-overlapping if  $\prod_i^n(a_i, b_i)\cap \prod_i^n(u_i, v_i)$ is empty.   Also, union of
    two intervals in $\mathbb{R}^n$ to be an interval in $\mathbb{R}^n.$ (see Lemma 2.1.2 \cite{lee}).        We know that the space  $\R^n $ equipped with the maximum norm $||.||,$ where 
    $||x|| = \max\limits_{ 1 \leq k \leq n}|x_k|.$  With this norm, we denote the closed ball of $\mathbb{R}^n$ by $B[\mathbf{x},r]=\{\mathbf{x}\in\mathbb{R}^n: ||\mathbf{y}-\mathbf{x}||\le r\},$ whose center is  $\mathbf{x}$ with sides parallel to the co-ordinates axes of length $2r.$ It is a closed interval    for side $i$ about $ x_i$ is in $[a_i,b_i].$ So, let $B[\mathbf{x},r]=[J,\mathbf{x}],$ where $J= \prod_{i=1}^{n}[a_i,b_i]$, $J $ is closed interval in $\R^n.$ 
    \begin{defn}
    If $E$ is a compact ball in $\R^n,$ a partition $P$ of $E$ is a collection $\{(J_i,x_i): x_i \in J_i, 1 \leq i \leq m \},$ where $J_1, J_2,\dots,J_m$ are non overlapping closed intervals i.e. $\m_\infty[J_i \cap J_j] = 0, i \neq j$ and $\bigcup\limits_{i=1}^{m} J_i = E.$
    \end{defn}
     If $\delta $ is a positive function on $E$ we say $P$ is Henstock $\delta$-partition of $E$ if for each $i$, $J_i \subset B^{'}(x_i, \delta(x_i)).$ The function $\delta $ is a gauge on $E.$
     \begin{defn}
     A function $f: E \to \R$ is said to be Henstock integrable on $E,$ if there exists a number $A$ such that for any $\epsilon >0 $ there exist a gauge $\delta$ and Henstock $\delta$-partition on $E$ such that $$|\sum\limits_{i=1}^{m}f(x_i)\m_\infty(J_i)- A| < \epsilon.$$ 
     \end{defn}

              
 \begin{defn}
  \cite{Mm} A function $ \theta: \mathbb{R}^{+} \to \mathbb{R^+}$ is said to be Young function, so that $\theta(x)= \theta(-x), \theta(0)=0, \theta(x) \to \infty $ as $ x \to \infty,$ but $ \theta(x_0) = +\infty $ for some $ x_0 \in \mathbb{R} $ is permitted. 
  \end{defn}

  \begin{defn} \cite{Ma}  Let $ m : \mathbb{R}^+ \to \mathbb{R}^+ $ be non-decreasing right continuous and non-negative function satisfying $$ m(0)=0, \mbox{~and~}  \lim\limits_{t \to \infty } m(t) = \infty.$$
  	\end{defn}
  \begin{defn} \cite{Ma} 
  	 A function $\theta: \mathbb{R}^{+} \to \mathbb{R} $ is called an $N$-function if there is a function $'m' $  satisfying the above sense that $$ \theta(u) = \int_{0}^{|u|} m(t) dt.$$
   Evidently, $\theta$ is an $N$-function if it is continuous, convex, even satisfies $$ \lim\limits_{u \to \infty } \frac{\theta(u)}{u}= \infty  \mbox{~and~}  \lim\limits_{u \to 0} \frac{\theta(u)}{u}= 0 .$$
    For example $\theta_p(x)= x^p;~ p > 1.$
   
     \end{defn} 
    \begin{defn}
        
          An $N$-function $\theta $ is said to satisfy $\Delta_2$-condition if there is a $ k > 0 $ such that $ \theta(2x) \leq k \theta(x)  $ for large values of $ x.$   
                  \end{defn} 
              
              \begin{defn}
              	An Orlicz function is a function $\theta: \mathbb{R}^{+} \to [0,\infty]$ such         that
              	\begin{enumerate}
              		\item[(1)]  		$\theta(0)=0$, $\theta(x)>0$ for some $x>0$ and $\theta(x_1)<\infty$ for some $x_1>0$
              		\item[(2)] $\theta$ is increasing: $x_1\le  x_2\iff \theta(x_1)\le \theta(x_2)$
              		\item[(3)] $\theta$ is convex: $\theta(ax_1 + (1-a)x_2)\le a\theta(x_1) +(1-a)\theta(x_2), 0\le a\le 1$
              	\item[(4)]  $\theta$ is left-continuous: $\theta(x-)=\theta(x), x\ge  0.$
              	\end{enumerate}              \end{defn}
              \noindent Throughout our work,  we consider only real functions in  H-Orlicz class  $\mathcal{H}^{-\theta}(\mu_{\infty}).$  We define $\mathcal{H}^{-\theta}(\mu_{\infty}) $ in Definition \ref{de21} of section 2.\\
    We refer \cite{etaltin,Ma,Mm,BP,nsrnns,bcrg} for details on Orlicz space and its applications. 
        \begin{defn}
        A Banach space $X$ is called Banach lattice if $||f||_{X} \leq ||g||_{X}$ for every $f,g \in X$ such that $ |f| \leq |g|.$
        \end{defn}
      \noindent   For every non-negative measurable function $f:\mathbb{R}^{+}\to \mathbb{R}^{+},$ we define the function \begin{equation}\label{eq001}
    \eta_{f}(t):=\m_\infty(\{f>t\}), t\in \mathbb{R}^{+},
    \end{equation} where  $\{f>t\}:=\{  x\in\mathbb{R}^{+}:f(x)>t \}, t\in \mathbb{R}^{+}$ are the upper Lebesgue sets of $f .$\\
    The function $\eta_{f}: \mathbb{R}^{+}\to [0,\infty]$, defined by (\ref{eq001}), is called     the (upper) distribution function of $f.$ Clearly $\eta_{f}$  is smooth (infinitely differentiable) and $\eta_{f}$  has compact support (is identically zero outside some bounded interval).\\
   The non-negative functions $f$ and $g$ are called equimeasurable if $\eta_f=\eta_g$ i.e., $\m_\infty(\{f>t\})=\m_{\infty}(\{g>t \}), t\in \mathbb{R}^{+}.$ 

     Let $f: \mathbb{R}^{+}\to \mathbb{R}$ be a real measurable function on  $\mathbb{R}^{+}$  and let $\eta_{|f|}(t):=\m_{\infty}(\{ |f|>t \})\in [0,+\infty], t\in\mathbb{R}^{+} $  
     be the distribution function of the absolute value $|f|.$ It is possible to have $\eta_{|f|}(t)= +\infty.$ In      the sequel, we assume (unless otherwise stated) that $\eta_{|f|}(t) \ne +\infty.$
     Since the function $\eta_{|f|}$ is decreasing and right-continuous, it has a unique      generalized inverse, which is also decreasing and right-continuous. This inverse  function $\eta_{|f|}^{-1}$ will be denoted by 
     \[ \eta_{|f|}^{-1}(x)=\inf\{ t\ge 0: \eta_{|f|}(t)\le x \}  .\]
     Since $\eta_{|f|}(t) \ne +\infty,$ there exists $t_0\ge 0$ such that the value $\eta_{|f|}(t_0)=\m_{\infty}(\{ |f|>t_0 \})< +\infty.$  This means that $\lim\limits_{t\to \infty}\eta_{|f|}(t)=0$ and hence  $\eta_{|f|}^{-1}(x)<\infty$ for all $x>0.$ \\
     A nonzero Banach space $(X,||.||_{X})$ of measurable functions on $(\mathbb{R}^{+},\m_{\infty})$  is called {\it symmetric} if the following conditions hold:
     \begin{enumerate}
     	\item[(a)]  If $|f|\le |g|$ and $g\in  X,$ then $f\in X$ and $||f||_X\le ||g||_X.$
     	\item[(b)] If $f$ and $g$ are equimeasurable and $g\in X,$ then $f\in X$ and $||f||_X=||g||_X.$
     \end{enumerate}
    A normed space $ (X, ||.||_X)$ satisfying condition (a) is called a {\it normed ideal lattice}. 
   \begin{thm}
    If $ f: I_0 \subseteq  \mathcal{B} \to \mathbb{R} $ is a measurable function with  gauge function  $ \delta: {I_0} \subseteq \mathcal{B} \to \mathbb{R} $ then $ \theta(f):\mathcal{B} \to \mathbb{R^+} $ is  Henstock integrable. 
    \end{thm}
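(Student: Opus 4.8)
The plan is to reduce the Henstock integrability of $\theta(f)$ to two structural features that make the composition tractable: the nonnegativity of every Orlicz function and the equivalence between Henstock and Lebesgue integrability for nonnegative integrands that was recorded in the Introduction.

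First I would settle measurability of the composition. Since $\theta$ is an Orlicz function it is increasing and convex on $\mathbb{R}^{+}$, hence Borel measurable; interpreting $\theta(f)$ as $\theta(|f|)$ (legitimate because every Young function is even, $\theta(x)=\theta(-x)$), the composition of the Borel map $\theta$ with the measurable function $|f|$ is again measurable. Thus $\theta(f):\mathcal{B}\to\mathbb{R}^{+}$ is a nonnegative measurable function, and the gauge $\delta$ attached to $f$ supplies the admissible $\delta$-fine partitions on which the Riemann sums $S(\theta(f),D)=\sum_{i}\theta(f(t_i))\,\mu_{\infty}(J_i)$ are formed.

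Next I would exploit nonnegativity. Because $\theta(f)\ge 0$, we have $\theta(f)=|\theta(f)|$, so the criterion quoted in the Introduction — a function $g$ is Lebesgue integrable if and only if both $g$ and $|g|$ are Henstock–Kurzweil integrable — collapses to the statement that $\theta(f)$ is Henstock integrable precisely when it is Lebesgue integrable, with equal integrals. Concretely I would verify the Cauchy criterion: given $\epsilon>0$, starting from the gauge $\delta$ that witnesses the integrability of $f$ and using the left-continuity and convexity of $\theta$, I would refine $\delta$ to a gauge $\delta_{\theta}$ for which any two $\delta_{\theta}$-fine tagged partitions $D_1,D_2$ of the ball $E\subseteq\mathcal{B}$ satisfy $|S(\theta(f),D_1)-S(\theta(f),D_2)|<\epsilon$. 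For a nonnegative integrand the absence of cancellation makes the Riemann sums monotone under refinement, which is exactly what forces the Cauchy property and pins down a candidate value $A=H\!\int_{E}\theta(f)$.

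The main obstacle is finiteness of that limiting value. For a nonnegative measurable integrand the Henstock integral always exists in $[0,+\infty]$, whereas the word \emph{integrable} demands $A<\infty$; this is precisely the point at which an ambient hypothesis must enter. I would close the gap either through $f$ lying in the Henstock–Orlicz class $\mathcal{H}^{-\theta}(\mu_{\infty})$ (so that $\int_{E}\theta(f)<\infty$ holds by the very definition of that class, forcing the monotone sums to stay bounded) or, in the abstract setting, through the finite-subset property of $(\mathcal{B},\Sigma,\mu_{\infty})$, which restricts attention to subsets on which $f$ is controlled. The argument then concludes by showing the sums $S(\theta(f),D)$ are uniformly bounded above over $\delta_{\theta}$-fine partitions; combined with the Cauchy criterion this yields a finite $A$ and hence the asserted Henstock integrability of $\theta(f)$.
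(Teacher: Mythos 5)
Your opening reduction (measurability of the composition, plus the fact that a \emph{nonnegative} measurable function is Henstock integrable precisely when its Lebesgue integral is finite) is sound, but the middle of your argument contains a false step. The claim that ``for a nonnegative integrand the absence of cancellation makes the Riemann sums monotone under refinement'' is not true: monotonicity under refinement is a property of Darboux (sup/inf) sums, not of tagged Riemann sums. In Henstock theory the tags are arbitrary, so refining a partition can move $S(\theta(f),D)$ up or down even when $\theta(f)\ge 0$; no Cauchy property comes for free from nonnegativity, and the gauge $\delta_{\theta}$ you promise to build from ``left-continuity and convexity of $\theta$'' is never constructed --- nor can it be, since a convex Young function can magnify differences of $f$-values arbitrarily, so closeness of Riemann sums of $f$ does not transfer to closeness of Riemann sums of $\theta(f)$. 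In fact, by your own reduction this entire Cauchy-criterion paragraph is superfluous: the only issue is finiteness of $\int_{\mathcal{B}}\theta(f)\,d\mu_{\infty}$.

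And that finiteness gap, which you correctly flag, is not closed by either of your proposed patches. Invoking $f\in\mathcal{H}^{-\theta}(\mu_{\infty})$ is circular: by Definition \ref{de21}, membership in the H-Orlicz class \emph{means} that $\theta(|f|)$ is Henstock integrable, which is the conclusion to be proved. The finite subset property of $(\mathcal{B},\Sigma,\mu_{\infty})$ gives no upper bound on $\int\theta(f)$ either. Indeed no hypothesis present in the statement can close the gap: take $\mathcal{B}=[0,1]$, $\theta(x)=x^{2}$, and $f(x)=x^{-1/2}$ for $x\in(0,1]$ with $f(0)=0$; then $f$ is Lebesgue, hence Henstock, integrable, yet $\theta(f)(x)=1/x$ is nonnegative with infinite integral, so it is not Henstock integrable. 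For comparison, the paper's own proof proceeds by writing the Cauchy criterion for $f$ over a common refinement, substituting $\theta(f)$ for $f$ in the resulting sums via the identity $\mu_{\infty}(I)=\sum_{(I',w')\in\pi'}\mu_{\infty}(I\cap I')$, and then asserting the $\epsilon$-estimate still holds --- exactly the unjustified transfer described above, with finiteness never addressed. So your instinct that an extra hypothesis is needed (e.g., $f$ bounded with compact support, as the paper assumes from Section 2 onward) is right, but as written neither your argument nor the paper's establishes the theorem.
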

  \begin{proof}
  Let $ f:I_0 \subseteq \mathcal{B}  \to \mathbb{R}^{+} $ be   measurable (integrable) function, then for all $\epsilon > 0 $ there exists a $\delta: I_{0} \to \mathbb{R} $ such that  $$\left|\sum\limits_{(I,w) \in \pi}\sum\limits_{(I^{'}, w^{'}) \in \pi^{'}} [f(w)-f(w^{'})]\m_\infty(I \cap I^{'})\right| < \epsilon $$ for all partitions $\pi$ and $\pi^{'} $ of $I_0$ finer than $\delta.$\\
  As $\theta $ is Young function. So, $\theta(x) \to \infty $ as $ x\to \infty.$ Our claim is $\theta(f) $ is  Henstock integrable.\\
   Since, Young function by definition, is an extended real Borel function. So, $\theta(f) $ is measurable. If $\pi$ and $\pi^{'} $ are both partitions of the same interval of  $I_0,$ then for any subinterval $ I$ of $ I_0$  we can write 
   \begin{align*} 
   &\m_\infty(I) = \sum\limits_{(I^{'},w^{'}) \in \pi^{'}}\m_\infty(I \cap I^{'}).\end{align*} 
   So, \begin{align*}
   &~~~~\sum\limits_{(I,w) \in \pi} \theta(f)(w) \m_\infty(I) =\sum\limits_{(I,w) \in \pi}\sum\limits_{(I^{'},w^{'}) \in \pi^{'}} \theta(f)(w) \m_\infty(I \cap I^{'})\\
&\mbox{i.e.}~\left|\sum\limits_{(I,w) \in \pi} \theta(f)(w) \mu_{\infty}(I)  - \sum\limits_{(I^{'},w^{'}) \in \pi^{'}} \theta(f)(w) \m_\infty(I )\right| < \epsilon.\end{align*}
   Thus, $ \theta(f)$ is Henstock integrable.
  \end{proof}
   \section{Structure of H-Orlicz class}
  In this section we define the Henstock-Orlicz class for bounded measurable functions with compact support and discuss some fundamental results of this class. 
  \begin{defn}\label{de21}
   Let $ \mathcal{H^{-\theta}(\mu_\infty)} $ be the set of all $ f:\mathcal{B} \to \mathbb{R} $   bounded  measurable with compact support  for $\Sigma \subset \mathcal{B}$  such that $\int_\mathcal{B} \theta(|f|)d\mu_{\infty} $ is  Henstock integrable. \\ 
  i.e. $\mathcal{H^{-\theta}(\mu_\infty)} = \{ f  \mbox{~is~bounded~measurable~with~compact~support}: \int_\mathcal{B} \theta(|f|)d\mu_{\infty} \in H(\m_\infty)\}.$
  \end{defn}
  \begin{thm}
  The space $ \mathcal{H^{-\theta}(\mu_{\infty}) }$ is absolutely convex. That is if $ f, g \in \mathcal{H^{-\theta}(\mu_{\infty})}$ and $ \alpha, \beta $ are scalars such that  $|\alpha|+|\beta| \leq 1$  then $\alpha f + \beta g \in \mathcal{H^{-\theta}(\mu_{\infty})}.$ Also $ h \in  \mathcal{H^{-\theta}(\mu_{\infty})}, |f| \leq |h|, f$ is  Henstock integrable, then $ f \in \mathcal{H^{-\theta}(\mu_{\infty})}.$
  \end{thm}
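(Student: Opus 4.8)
The statement has two parts, and both reduce to a domination argument for the nonnegative composite $\theta(|\cdot|)$, so the plan is first to record a comparison principle and then to verify the two closure properties.

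\emph{Comparison principle.} The key observation is that every function of the form $\theta(|u|)$ is nonnegative, and for nonnegative functions Henstock integrability coincides with Lebesgue integrability: as noted in the introduction, $u$ is Lebesgue integrable if and only if both $u$ and $|u|$ are Henstock integrable, so a nonnegative Henstock integrable function $g=|g|$ is automatically Lebesgue integrable. Consequently, if $0\le \varphi\le \psi$ with $\varphi$ measurable and $\psi$ Henstock integrable, then $\psi$ is Lebesgue integrable, domination gives that $\varphi$ is Lebesgue integrable, and hence $\varphi\in H(\m_\infty)$. This is the step I expect to be the crux, since monotonicity of the Henstock integral under domination is false for general (non-absolutely integrable) functions and only becomes available after passing through the nonnegative/Lebesgue equivalence.

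\emph{Absolute convexity.} Given $f,g\in\mathcal{H}^{-\theta}(\m_\infty)$ and scalars with $|\al|+|\be|\le 1$, first I would check the membership conditions: $\al f+\be g$ is measurable and bounded as a finite linear combination of bounded measurable functions, and its support lies in the union of the (compact) supports of $f$ and $g$, hence is contained in a compact set. For the integrability of $\theta(|\al f+\be g|)$, set $s=|\al|+|\be|$; the triangle inequality and monotonicity of $\theta$ give $\theta(|\al f+\be g|)\le \theta(|\al|\,|f|+|\be|\,|g|)$. Writing $|\al|\,|f|+|\be|\,|g|=s\big(\tfrac{|\al|}{s}|f|+\tfrac{|\be|}{s}|g|\big)$ and using first $\theta(sx)\le s\,\theta(x)$ for $s\le 1$ (which follows from convexity and $\theta(0)=0$) and then convexity of $\theta$ on the genuine convex combination, one obtains the pointwise bound
\begin{equation*}
\theta(|\al f+\be g|)\le |\al|\,\theta(|f|)+|\be|\,\theta(|g|).
\end{equation*}
The right-hand side lies in $H(\m_\infty)$ because $\theta(|f|)$ and $\theta(|g|)$ do (by membership) and the Henstock integrable functions form a vector space; the comparison principle then yields $\theta(|\al f+\be g|)\in H(\m_\infty)$, so $\al f+\be g\in\mathcal{H}^{-\theta}(\m_\infty)$.

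\emph{Solidity.} For the second assertion, suppose $h\in\mathcal{H}^{-\theta}(\m_\infty)$, $|f|\le|h|$, and $f$ is Henstock integrable. Then $f$ is measurable, it is bounded since $|f|\le|h|$ and $h$ is bounded, and $\{f\ne 0\}\subseteq\{h\ne 0\}$ forces $f$ to have compact support. Since $\theta$ is increasing, $|f|\le|h|$ gives $\theta(|f|)\le\theta(|h|)$ pointwise, and $\theta(|h|)\in H(\m_\infty)$; applying the comparison principle once more shows $\theta(|f|)\in H(\m_\infty)$, whence $f\in\mathcal{H}^{-\theta}(\m_\infty)$. The only place any care is needed is again the passage from pointwise domination to Henstock integrability, which is handled uniformly by the comparison principle above.
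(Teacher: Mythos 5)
Your proof is correct, and its skeleton coincides with the paper's: the pointwise convexity estimate $\theta(|\al f+\be g|)\le|\al|\,\theta(|f|)+|\be|\,\theta(|g|)$ for the first part, and the pointwise bound $\theta(|f|)\le\theta(|h|)$ for the second, each followed by a domination step. Where you genuinely differ is in how that domination step is discharged, and your treatment is the more rigorous one. The paper simply asserts in the first part that Henstock integrability of the dominating function yields Henstock integrability of the dominated one; in the second part it manufactures a sandwich $|h-\epsilon|\le|f|\le|h|$ in which $\epsilon$ is chosen pointwise (``for all $x\in\mcB$ there exists $\epsilon>0$'') yet then treated as a single constant, and invokes Lemma 9 of Swartz — a detour that is incoherent as written and unnecessary, since the lower bound $0$ suffices. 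You instead isolate a clean comparison principle: a nonnegative Henstock integrable function is Lebesgue integrable (by the equivalence quoted in the paper's introduction), so $0\le\varphi\le\psi$ with $\varphi$ measurable and $\psi$ Henstock integrable forces $\varphi$ to be Lebesgue, hence Henstock, integrable. This is essentially the content of the Swartz lemma the paper cites, but your derivation is self-contained, it is applied uniformly to both parts, and you correctly flag why it is needed at all: naive monotonicity under domination fails for non-absolute integrals. You also verify the boundedness and compact-support requirements for membership in $\mathcal{H}^{-\theta}(\mu_{\infty})$ (e.g., $\{f\ne0\}\subseteq\{h\ne0\}$ in the solidity part), which the paper's proof omits entirely. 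In short: same strategy, but your handling of the only delicate point repairs the gaps in the paper's own argument.
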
 
  \begin{proof}
     Let $ f, g \in \mathcal{H^{-\theta}(\mu_{\infty})}.$ 
Then by the monotonicity and convexity of $\theta,$ we get $ 0< \gamma = |\alpha| +|\beta| \leq 1.$        Now, \begin{align*}
       \theta(|\alpha f + \beta g|) &\leq \theta(|\alpha||f| +|\beta||g|) \\
       &\leq \gamma \theta\left(\frac{|\alpha|}{\gamma}|f| + \frac{|\beta}{\gamma}|g|\right) \\ 
       &\leq {|\alpha| \theta(|f|) + |\beta|\theta(|g|)}.
       \end{align*}
The right hand side is  Henstock integrable. So, $\alpha f + \beta g \in H^{-\theta}(\mu_{\infty}).$\\
For second part $ |f| \leq |h|.$   If $f \neq h $ then for all $x \in \mcB,$ there exists $\epsilon >0 $ such that   $ |h-\epsilon| \leq |f| \leq |h|.$\\
As $ h \in H^{-\theta}(\mu_{\infty})$ gives $\theta(|h|) $ is  Henstock  integrable. Also, $\theta(|h-\epsilon|) $ is  Henstock  integrable.\\
    So, by Lemma 9 \cite{Cs}, $ f \in \mathcal{H^{-\theta}(\mu_{\infty})}.$
\end{proof}
  
  \begin{thm}
  The space $\mathcal{H^{-\theta}(\mu_\infty)} $ is a linear space if and only if $\theta$ satisfies $\Delta_2$-condition.
  \end{thm}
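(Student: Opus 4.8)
The plan is to prove both implications, using the absolute convexity established in the previous theorem to reduce the algebraic content to closure under a single dilation. First I note that boundedness and compact support are automatically preserved under finite linear combinations, so the only question is whether $\int_{\mathcal B}\theta(|\,\cdot\,|)\,d\mu_\infty$ remains Henstock integrable. The previous theorem, applied with $\alpha=\beta=\tfrac12$, already gives $\tfrac12(f+g)\in\mathcal{H}^{-\theta}(\mu_\infty)$ whenever $f,g\in\mathcal{H}^{-\theta}(\mu_\infty)$, and it gives closure under scalars of modulus at most $1$; since $f+g=2\cdot\tfrac12(f+g)$ and any $\lambda f$ with $|\lambda|\le 2^n$ can be written as $2^n$ applied $n$ times to $(\lambda/2^n)f\in\mathcal{H}^{-\theta}(\mu_\infty)$, linearity reduces to the single implication $f\in\mathcal{H}^{-\theta}(\mu_\infty)\Rightarrow 2f\in\mathcal{H}^{-\theta}(\mu_\infty)$.

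For the sufficiency, assume $\theta\in\Delta_2$, say $\theta(2x)\le k\,\theta(x)$ for $x\ge x_0$. Given $f\in\mathcal{H}^{-\theta}(\mu_\infty)$, monotonicity of $\theta$ yields pointwise $\theta(2|f|)\le k\,\theta(|f|)$ wherever $|f|\ge x_0$ and $\theta(2|f|)\le\theta(2x_0)$ wherever $0<|f|<x_0$, so that $\theta(2|f|)\le k\,\theta(|f|)+\theta(2x_0)\,\mathbf{1}_{\operatorname{supp}f}$ everywhere. The right-hand side is the sum of a scalar multiple of the Henstock integrable function $\theta(|f|)$ and a bounded, compactly supported (hence integrable) term, so it is Henstock integrable; as $0\le\theta(2|f|)$ is dominated by it, Lemma~9 of \cite{Cs}---the comparison principle already invoked in the previous proof---gives $2f\in\mathcal{H}^{-\theta}(\mu_\infty)$, which establishes linearity.

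For the necessity I argue contrapositively. If $\theta\notin\Delta_2$, then for every $n$ there is $x_n$ with $\theta(2x_n)>2^n\theta(x_n)$, and these may be chosen with $x_n\uparrow\infty$. Using that $\mu_\infty$ is Lebesgue measure, I pick pairwise disjoint sets $E_n$ with $\mu_\infty(E_n)=\bigl(2^n\theta(x_n)\bigr)^{-1}$ and set $f=\sum_n x_n\mathbf{1}_{E_n}$. Then $\int_{\mathcal B}\theta(|f|)\,d\mu_\infty=\sum_n\theta(x_n)\mu_\infty(E_n)=\sum_n 2^{-n}<\infty$, whereas $\int_{\mathcal B}\theta(2|f|)\,d\mu_\infty=\sum_n\theta(2x_n)\mu_\infty(E_n)>\sum_n 1=\infty$. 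Thus $f$ satisfies the integrability condition while $2f$ does not, so the class is not closed under dilation and hence is not linear.

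The main obstacle is that this witness $f$ is unbounded and so does not literally lie in $\mathcal{H}^{-\theta}(\mu_\infty)$ as defined (bounded, compact support); when the failure of $\Delta_2$ holds only ``for large $x$'', any such witness is forced to take arbitrarily large values. I would reconcile this in one of two ways: either treat the equivalence as a statement about the underlying H-Orlicz class prior to the boundedness normalization, so that $f$ is admissible; or work with the bounded truncations $f_N=\sum_{n\le N}x_n\mathbf{1}_{E_n}\in\mathcal{H}^{-\theta}(\mu_\infty)$ and extract a contradiction from $\int_{\mathcal B}\theta(2|f_N|)\,d\mu_\infty=N\to\infty$ together with a closedness property of the class. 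Forcing the truncation route to contradict \emph{algebraic} linearity, rather than mere topological completeness, is the delicate point where I expect to spend the most effort; if it cannot be forced, the honest conclusion is that the boundedness clause of Definition~\ref{de21} must be relaxed for the stated equivalence to hold.
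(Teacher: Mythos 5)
Your proposal follows the same route as the paper's own proof: reduce linearity to closure under the single dilation $f\mapsto 2f$, prove sufficiency by splitting $f$ at the level $x_0$ below which the $\Delta_2$ inequality may fail, and prove necessity by building a witness $f=\sum_n x_n\mathbf{1}_{E_n}$ supported on disjoint sets of rapidly shrinking measure, so that $\int_{\mathcal{B}}\theta(|f|)\,d\mu_\infty$ converges while $\int_{\mathcal{B}}\theta(2|f|)\,d\mu_\infty$ diverges. Your sufficiency step is in fact slightly cleaner than the paper's: the pointwise bound $\theta(2|f|)\le k\,\theta(|f|)+\theta(2x_0)\,\mathbf{1}_{\mathrm{supp}\,f}$ handles the finite- and infinite-measure cases at once, and it silently repairs a mislabeled step in the paper, where the inequality $\theta(2|f_1+f_2|)\le\theta(2|f_1|)+\theta(2|f_2|)$ is attributed to convexity (a convex function vanishing at the origin is superadditive, not subadditive; the inequality actually holds there because $f_1$ and $f_2$ have disjoint supports). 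Your necessity construction with $\theta(2x_n)>2^n\theta(x_n)$ and $\mu_\infty(E_n)=\bigl(2^n\theta(x_n)\bigr)^{-1}$ is the same device as the paper's, which uses $\theta(2x_j)>j\,\theta(x_j)$ and sets of measure roughly $j^{-2}\theta(x_j)^{-1}$, producing $\sum_j j^{-2}<\infty$ against $\sum_j j^{-1}=\infty$.

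The obstacle you flag at the end is not a defect of your write-up but a genuine flaw in the theorem as stated, and the paper does not escape it: its own witness takes the values $x_j\ge j\to\infty$, so it is just as unbounded as yours and equally violates the boundedness clause of Definition \ref{de21}. Worse, under that definition the necessity direction is actually false whenever $\theta$ is a finite-valued $N$-function: if $f$ is bounded with compact support, then $\theta(2|f|)$ is again bounded, measurable, and compactly supported, hence Lebesgue and therefore Henstock integrable, so $2f$ lies in the class and the class is linear whether or not $\theta\in\Delta_2$. Your truncation route is therefore bound to fail for exactly the reason you suspect: each $f_N$ and each $2f_N$ belongs to the class, and what breaks in the limit is a completeness or Fatou-type property, not algebraic linearity. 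Your closing conclusion is the correct resolution: the stated equivalence holds only if the boundedness normalization is dropped, i.e., if $\mathcal{H}^{-\theta}(\mu_\infty)$ is read as the class of all measurable $f$ for which $\int_{\mathcal{B}}\theta(|f|)\,d\mu_\infty$ is (finite and) Henstock integrable --- which is evidently what the paper's own proof of necessity presupposes without saying so.
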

  \begin{proof}
   For linearity, it is sufficient to verify that for each $ f \in \mathcal{H^{-\theta}(\mu_\infty)}, 2f \in \mcH^{-\theta}(\m_\infty) $ since then $nf \in \mathcal{H^{-\theta}(\mu_\infty)}$ for any integer $n.$ Since $ h \in \mcH^{-\theta}(\mu_{\infty}), |f| \leq |h|, f $ measurable, implies $f \in \mcH^{-\theta}(\m_\infty)$ hence for each $\alpha> 0,~\alpha f \in \mathcal{H^{-\theta}(\mu_\infty)}.$ \\
   Then yields,  $\alpha f + \beta g = \gamma\left(\frac{\alpha}{\gamma}f + \frac{\beta}{\gamma}g\right) \in \mathcal{H^{-\theta}(\mu_\infty)},~\gamma= |\alpha| +|\beta| > 0 $ for any $ f, g \in \mathcal{H^{-\theta}(\mu_\infty)}.$\\
   If $ \theta $ is $\Delta_2$-regular, then $\mu_{\infty}(\mathcal{B}) = + \infty$ and $  \theta(2|f|) \leq k \theta(|f|),~k> 0$ implies $ 2f \in \mathcal{H^{-\theta}(\mu_\infty)}.$\\ 
   If $\mu_{\infty}(\mathcal{B})< \infty $ then $\theta(2x) \leq k \theta(x)$ for $ x \geq x_0 \geq 0.$\\
   Let
   \[f_1=\left\{\begin{array}{c} 
              f ,\mbox{ if } |f|\leq x_0,\\
             0,\mbox{ otherwise } \end{array}\right .\]
    We set $ f_2 = f - f_1 $ that is $ f = f_1 + f_2 $ and
         \begin{align*}\theta(2|f|) = \theta(2|f_1 + f_2|)
& \leq \theta(2|f_1|)+\theta(2|f_2|) ~(\mbox{by~convexity~of~}\theta)\\
& \leq \theta(2|f_1|) +k\theta(2|f_2|) ~(\mbox{by~}\Delta_2-\mbox{condition}).
         \end{align*}
 Therefore $ \int_{\mathcal{B}}\theta(2|f|)d\mu_\infty $ is Henstock integrable as right side is Henstock integrable.\\
    So, $ 2f \in \mathcal{H^{-\theta}(\mu_\infty)}. $  That is $ nf \in \mathcal{H^{-\theta}(\mu_\infty)} $ for any integer $ n $ \\
    Therefore $ \mathcal{H^{-\theta}(\mu_\infty)} $ is linear when $ \theta \in \Delta_2.$\\
    For converse, let $ E \in \Sigma $ be a set of positive measure on which $\mu_\infty  $ is diffuse and that $\theta$ not belong in $ \Delta_2. $\\ 
    We construct $ f \in \mathcal{H^{-\theta}(\mu_\infty)} $ such that $ 2f \notin \mathcal{H^{-\theta}(\mu_\infty)}.$ \\
    If $ 0 < \alpha < \mu_\infty(E) < \infty .$ Then by hypothesis on $\mu_\infty ,$ there is  $ F \subset E,~F \in \Sigma $ with $\mu(F) = \alpha < \infty.$\\
    We construct a function supported by $ F $ to satisfy our assumption. Let $ \theta(\mathbb{R}) \subset \mathbb{R^+ }.$ Since $\theta$ does not satisfies $\Delta_2$-condition.\\
    There exists a sequence $ x_j \geq j $ such that $\theta(2x_j) > n\theta(x_j),~j \geq 1. $\\
    Let $ n_0 $ be an integer such that $$ \sum_{ n \geq n_0}^{} \frac{1}{n^2} < \infty $$ and $ \theta(x_n) \geq 1$ for all $ n \geq n_0.$\\
    This is possible by diffuseness of $\mu_\infty$ on $F$ there is a measurable $ F_0 \subset F $ such that $$\mu_{\infty}(F_0) = \sum_{ n \geq n_0}^{} \frac{1}{n^2} < \infty. $$
    Similarly we can find a set $ D_1 \in \Sigma, D_1 \subset F_0$ such that $\mu_{\infty}(D_1) = \frac{1}{n^2}.$\\
    Since  $\mu_{\infty}(F_0 - D) > 0 ,$
we can again find $ D_2 \in \Sigma $ and $ D_2 \subset F_0 - D_1 $ such that $\mu_{\infty}(D_2) = \frac{1}{(n_0 + 1)^2}.$\\
    Then for disjoint sets $ D_n \in \Sigma,~\mu_{\infty}(D_n) = \frac{1}{(n_0 + n-1)^2},~n \geq 1.$\\
    Let $ F_k \subset D_k,~F_k \in \Sigma,$ be chosen such that $ \mu_{\infty}(F_k) = \frac{\mu_{\infty}(D_k)}{\theta(x_n)}.$\\
    Let $$ \sum_{j=1}^{n} \theta(x_j)\mu_{\infty}(F_j) = \int_{I_0}f. $$ Then $ f \in \mathcal{H^{-\theta}(\mu_{\infty})} $ (Remark 5 of \cite{Cs}).
However         \begin{align*}
                  \int_{\mathcal{B}} \theta(2f) d\mu_{\infty}& = \sum_{j=1}^{n} \theta(2x_j)\mu_{\infty}(F_j)\\
                  &\geq \sum_{n \geq n_0} j  \theta(x_j)\mu_{\infty}(F_j)\\ 
                  & = \sum_{ n\geq n_0} \frac{1}{j} = + \infty .
               \end{align*}
    So, $ 2f \notin \mathcal{H^{-\theta}(\mu_{\infty})}.$\\
    So, our conclusion $ \mathcal{H^{-\theta}(\mu_{\infty})} $ is a class of scalar function and it is linear if and only it is closed under multiplication by positive number.\end{proof}
    \begin{example}
     Consider $\mathcal{B} =\{ [i-1,i]~: i \in \mathbb{N} \} .$ Let $\Sigma $ be power set of $\mathcal{B} $ and $\mu_{\infty}$ be positive measure  with the $i$th interval has measure $1,$ and    let $\theta(x)= e^{x^2} -1.$ Then $\theta(x)$ is $N$-function such that $\theta \notin \Delta_2,$  we have $\mathcal{H^{-\theta}(\mu_{\infty})} $ is linear space.
     \end{example}
     \begin{proof}
      By definition $\theta(x) $ is $N$-function and it is not satisfies $\Delta_2$-condition. 
 We will see $ \mathcal{H^{-\theta}(\mu_{\infty}) }$ is linear space.\\
         If $ f \in \mathcal{H}^{-\theta}(\mu_{\infty}),$ then $\int_\mathcal{B} {\theta(f)} d\mu_{\infty} =\sum\limits_{n=1}^{\infty}(e^{|f|^2}-1) < \infty.$\\
     So, $\int_{\mathcal{B}} \theta(f) d\mu_{\infty}$ is bounded and  Henstock integrable.\\
         Let $ K>0 $ be bound. So $(e^{|f|^2}-1)\leq K $ implies $e^{|f|^2}\leq 1+ K.$\\
        Therefore $\int_{\mathcal{B}} (\theta(2f) d\mu_{\infty} = \sum\limits_{n=1}^{\infty}\left(e^{4|f|^2}-1\right).$\\
        That is 
        \begin{align*}
        \int_\mathcal{B} \theta(2f) d\mu_{\infty} &\leq \sum_{n=1}^{\infty}( \exp(|f(x)|^2)-1)(K+2)((K+1)^2+1)\\&\leq [(K+2)(K+1)^2 +1]\int_{\mathcal{B}}\theta(f)d\mu_{\infty}.
\end{align*}         
           Right side is  Henstock  integrable. So $ 2f \in \mathcal{H^{-\theta}(\mu_{\infty}) }$ and the space is linear.
  \end{proof} 
       \begin{prop}\label{prop21}
       Let $ (\mathcal{B}, \Sigma, \mu_{\infty}) $ be finite measure space. Then $ H(\m_\infty) = \bigcup \{ \mathcal{H^{-\theta}(\mu_{\infty})} : \theta $ ranges over all $N$-function.\}
       \end{prop}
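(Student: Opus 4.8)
The statement is the Henstock analogue of the classical fact that on a finite measure space $L^{1}=\bigcup_{\theta}L^{\theta}$, so the plan is to prove the two inclusions separately, the inclusion $\bigcup_{\theta}\mcH^{-\theta}(\m_\infty)\subseteq H(\m_\infty)$ being routine and the reverse inclusion carrying essentially all the content. Throughout I write $g=|f|$ and work with the distribution function $\eta_{g}(s)=\m_\infty(\{g>s\})$ of \eqref{eq001}.

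For the easy inclusion, fix an $N$-function $\theta$ and $f\in\mcH^{-\theta}(\m_\infty)$. Since $\theta$ is an $N$-function, $\theta(u)/u\to\infty$ as $u\to\infty$, so there is $u_{0}>0$ with $u\le\theta(u)$ for $u\ge u_{0}$. Splitting $\mcB$ into $\{g\le u_{0}\}$ and $\{g>u_{0}\}$, on the first set $g\le u_{0}$ and $\m_\infty(\mcB)<\infty$ give $\int_{\{g\le u_0\}}g\,d\m_\infty\le u_{0}\,\m_\infty(\mcB)<\infty$, while on the second set $g\le\theta(g)$, which is Henstock integrable by hypothesis. Hence $g$ is integrable, so $f$ is absolutely integrable and therefore $f\in H(\m_\infty)$. (For the bounded, compactly supported representatives this is immediate, but the superlinearity estimate is what survives if one drops the boundedness hypothesis.)

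The substantive direction is $H(\m_\infty)\subseteq\bigcup_{\theta}\mcH^{-\theta}(\m_\infty)$: given $f\in H(\m_\infty)$ I must manufacture a single $N$-function $\theta$ for which $\int_{\mcB}\theta(g)\,d\m_\infty<\infty$. The engine is the layer-cake identity: if $\theta(u)=\int_{0}^{|u|}m(t)\,dt$ with $m$ as in the definition of an $N$-function, then Tonelli gives
\[
\int_{\mcB}\theta(g)\,d\m_\infty=\int_{0}^{\infty}m(s)\,\eta_{g}(s)\,ds .
\]
Because $\int_{0}^{\infty}\eta_{g}(s)\,ds=\int_{\mcB}g\,d\m_\infty<\infty$, I can run the de la Vall\'ee-Poussin construction: choose $0=a_{0}<a_{1}<a_{2}<\cdots\uparrow\infty$ with $\int_{a_{n}}^{\infty}\eta_{g}(s)\,ds<2^{-n}$, and set the density $m$ equal to $0$ on $[0,a_{1})$ and equal to $n$ on $[a_{n},a_{n+1})$. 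Then $m$ is non-negative, non-decreasing, right-continuous, $m(0)=0$ and $m(t)\to\infty$, so $\theta(u)=\int_{0}^{|u|}m(t)\,dt$ is a genuine $N$-function, and
\[
\int_{\mcB}\theta(g)\,d\m_\infty=\sum_{n\ge1} n\int_{a_{n}}^{a_{n+1}}\eta_{g}(s)\,ds\le\sum_{n\ge1} n\,2^{-n}<\infty .
\]
Thus $\theta(g)$ is Henstock integrable and $f\in\mcH^{-\theta}(\m_\infty)$, which closes the reverse inclusion.

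The main obstacle is this reverse inclusion, and within it the balancing act in the choice of $m$: the density must tend to $\infty$ (which is exactly what forces $\theta$ to be superlinear, hence a bona fide $N$-function rather than a multiple of the identity) while the weighted tail integral $\int_{0}^{\infty}m(s)\eta_{g}(s)\,ds$ stays finite. The tail-summability device $\int_{a_n}^\infty\eta_g<2^{-n}$ is precisely what reconciles these competing demands. A secondary point requiring care is the legitimacy of the starting integrability $\int_{\mcB}g\,d\m_\infty<\infty$, which makes $\eta_g$ integrable so the construction can begin; on the finite measure space this holds for the bounded, compactly supported functions that populate the classes $\mcH^{-\theta}(\m_\infty)$, and it is what ties the two halves of the equality together.
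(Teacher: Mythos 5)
Your easy inclusion is fine, and in fact a little cleaner than the paper's own treatment of that direction (the paper bounds $\theta(x)\ge ax+b$ by convexity and then appeals to boundedness of $f$ and Lusin's theorem; your superlinearity estimate $u\le\theta(u)$ for $u\ge u_{0}$ gets the same conclusion by plain domination). The genuine problem is the hard direction, and it sits exactly at the step you dismiss as ``secondary'': the assertion $\int_{\mcB}|f|\,d\m_\infty<\infty$ for an arbitrary $f\in H(\m_\infty)$ is not a point requiring care --- it is false. $H(\m_\infty)$ is the space of Henstock integrable functions, and the defining feature of this integral, stressed in the paper's own introduction, is that it is non-absolute: $f$ can be Henstock integrable while $|f|$ is not (e.g.\ $f(x)=\tfrac{d}{dx}\bigl(x^{2}\sin(x^{-2})\bigr)$ on $[0,1]$). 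For such an $f$ the distribution function $\eta_{|f|}$ is not integrable, your tail sets $a_{n}$ with $\int_{a_{n}}^{\infty}\eta_{|f|}<2^{-n}$ do not exist, and the de la Vall\'ee-Poussin construction cannot even begin. The patch you offer is circular: you justify $\int|f|\,d\m_\infty<\infty$ by saying it holds for the bounded, compactly supported functions that populate the classes $\mcH^{-\theta}(\m_\infty)$ --- but in this direction of the inclusion $f$ is a general element of $H(\m_\infty)$, not yet known to lie in any such class; that membership is what you are trying to prove. Note also that, even granting the integrability of $|f|$, finiteness of $\int_{\mcB}\theta(|f|)\,d\m_\infty$ alone does not place $f$ in $\mcH^{-\theta}(\m_\infty)$ as the paper defines it, since Definition \ref{de21} additionally demands that $f$ be bounded with compact support.

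The paper escapes this trap by its standing convention, built into Definition \ref{de21} and invoked explicitly in its proof (``since $f$ is bounded''), that the functions in play are bounded, measurable, with compact support. Under that hypothesis the hard direction is essentially trivial and requires no constructed $\theta$ at all: a bounded measurable $f$ on a finite measure space is Lebesgue integrable, $\theta(|f|)$ is again bounded and measurable (an $N$-function is finite and continuous on bounded sets), hence Lebesgue --- so Henstock --- integrable, and therefore $f\in\mcH^{-\theta}(\m_\infty)$ for \emph{every} $N$-function $\theta$ simultaneously. So either you adopt the paper's boundedness convention, in which case your de la Vall\'ee-Poussin machinery is correct but unnecessary, or you insist on genuinely arbitrary $f\in H(\m_\infty)$, in which case the claimed equality itself fails: any $N$-function satisfies $\theta(u)\ge u-u_{0}$ for some $u_{0}$, so integrability of $\theta(|f|)$ would force integrability of $|f|$, which a conditionally Henstock integrable $f$ does not have. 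Your construction is the right tool for the classical theorem ($L_{1}$ equals the union of the Orlicz classes) on which it is modelled; it does not transplant to the non-absolute setting, and recognizing that obstruction is precisely where this proposition's content lies.
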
 
       \begin{proof}
       Since $\theta$ is convex. $\theta(x) \geq ax +b $ for some constant $a,b.$        For each $ f \in \mathcal{H^{-\theta}(\mu_{\infty})} $, we have $\int_{\mathcal{B}}(a|f| + b)d\mu_{\infty} \leq \int_{\mathcal{B}}\theta(|f|)d\mu_{\infty}$.\\
       Since  $ \int_{\mathcal{B}} \theta(|f|)d\mu_{\infty} $ is Henstock integrable, So $f$ is bounded measurable function.  Using Lusin's theorem, and the  continuity of $f$  
       (Exercise 13 of \cite{Ra}), we have  $ f $ is Henstock  integrable, so $ f \in H(\m_\infty).$ Since this hold for all such convex function. Therefore $ \bigcup \mathcal{H^{-\theta}(\mu_{\infty})} \subset H(\m_\infty).$\\
       If $ f \in H(\m_\infty) ,$ and since $f$ is bounded, so $f$ is Lebesgue integrable on $\mathcal{B}$ (see Theorem 9.13, \cite{Ra}). Therefore, we get $|f|$ is Henstock integrable. Thus  $\theta(|f|) $ is Henstock  integrable.\\
       Therefore $ f \in \mathcal{H^{-\theta}(\mu_{\infty})}.$ So $ H(\m_\infty) \subset \bigcup \mathcal{H^{-\theta}(\mu_{\infty})}$ and hence the result.
       \end{proof}
  \begin{corollary}\label{prop22}
    $ L_1(\mu_{\infty}) = \bigcup \mathcal{H}^{-\theta}(\mu_{\infty}).$ 
  \end{corollary}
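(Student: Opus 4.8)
The plan is to derive the corollary directly from Proposition \ref{prop21}, which already identifies the union $\bigcup_{\theta}\mathcal{H}^{-\theta}(\mu_{\infty})$, taken over all $N$-functions $\theta$, with the space $H(\m_\infty)$ of Henstock integrable functions on the finite measure space $(\mathcal{B},\Sigma,\mu_{\infty})$. Consequently the entire content of the corollary collapses to the single equality $H(\m_\infty)=L_1(\mu_{\infty})$, and I would prove this by establishing the two set inclusions separately and then appealing to Proposition \ref{prop21}.

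For the inclusion $L_1(\mu_{\infty})\subseteq H(\m_\infty)$, I would invoke the fact recalled in the Introduction that the Henstock--Kurzweil integral contains the Lebesgue integral: every Lebesgue integrable function is Henstock integrable (see \cite{Rh,Cs}). Hence any $f\in L_1(\mu_{\infty})$ automatically lies in $H(\m_\infty)$, and this direction is immediate.

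For the reverse inclusion $H(\m_\infty)\subseteq L_1(\mu_{\infty})$, I would use the standing restriction that every member of each class $\mathcal{H}^{-\theta}(\mu_{\infty})$ is bounded and measurable with compact support. Given such an $f$, its absolute value $|f|$ is again bounded, measurable, and compactly supported, so by the equivalence (a)$\Leftrightarrow$(b) stated in the Introduction both $f$ and $|f|$ are Henstock integrable. Since $f$ is Lebesgue integrable precisely when both $f$ and $|f|$ are Henstock integrable, this yields $f\in L_1(\mu_{\infty})$. Combining the two inclusions gives $H(\m_\infty)=L_1(\mu_{\infty})$, and substituting into Proposition \ref{prop21} finishes the proof.

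The step I expect to be the most delicate is the reverse inclusion, where one must be careful that the compact-support and boundedness hypotheses are genuinely available for an arbitrary element of $H(\m_\infty)=\bigcup_{\theta}\mathcal{H}^{-\theta}(\mu_{\infty})$. This is exactly the point at which the finiteness of the measure space, together with the definition of $\mathcal{H}^{-\theta}(\mu_{\infty})$ through bounded compactly supported functions, is used to guarantee that $|f|$, and not merely $f$, is Henstock integrable; without that control the passage from Henstock to Lebesgue integrability could fail, since the Henstock integral is in general only a non-absolute integral.
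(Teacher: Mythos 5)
Your proposal is correct and coincides with the paper's intended justification: the paper states this corollary without any proof, treating it as an immediate consequence of Proposition \ref{prop21} combined with the fact (recalled in the Introduction) that for bounded measurable functions with compact support Henstock and Lebesgue integrability coincide, which is exactly the two-inclusion argument you give. The only caveat --- that an arbitrary element of $L_1(\mu_{\infty})$ need not be bounded with compact support, so the identification $H(\mu_\infty)=L_1(\mu_{\infty})$ really rests on the standing boundedness/compact-support convention --- is a looseness inherited from the paper itself, and you correctly flag it as the delicate point.
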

 \section{H-Orlicz Space}
 In this section we introduce the Henstock-Orlicz (shortly H-Orlicz) space over an arbitrary measure space $ (\mathcal{B}, \Sigma, \mu_{\infty}),$ and discuss some topological properties of the space. 
 \begin{defn}
Let $ (\mathcal{B}, \Sigma, \mu_{\infty})$ be an arbitrary measure space and let $ \mathcal{H}^{-\theta}(\mu_{\infty}) $ be the Henstock-Orlicz class on $ (\mathcal{B}, \Sigma, \mu_{\infty}).$ Then the space $ \mathcal{H}^{\theta}(\mu_{\infty}) $ of all measurable  function $ f: \mathcal{B} \to \mathbb{R} $ such that $ \alpha f \in \mathcal{H}^{-\theta}(\mu_{\infty}) $ for some $ \alpha > 0 $ is called H-Orlicz space. We define the  H-Orlicz space as follows: $$ \mathcal{H}^{\theta}(\mu_{\infty}) = \left\{ f: \mathcal{B} \to \mathbb{R} \mbox{~measurable}:  \int_{\mathcal{B}}\theta(\alpha f ) d\mu_{\infty} \in H(\m_\infty)  \mbox{~for~some~}  \alpha > 0\right\}.$$ 
\end{defn}
  \begin{prop}\label{prop23}
  $ \mathcal{H}^{\theta}(\mu_{\infty}) $ is a linear space.
  \end{prop}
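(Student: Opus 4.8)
The plan is to verify directly the two defining closure conditions for a linear space, namely closure under scalar multiplication and under addition, reducing each to a property of the H-Orlicz \emph{class} $\mathcal{H}^{-\theta}(\mu_{\infty})$ already established earlier in the paper. Recall that by definition $f \in \mathcal{H}^{\theta}(\mu_{\infty})$ means precisely that $\alpha f \in \mathcal{H}^{-\theta}(\mu_{\infty})$ for \emph{some} $\alpha > 0$, so the whole argument is a matter of exhibiting suitable positive scaling factors.

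For scalar multiplication, suppose $f \in \mathcal{H}^{\theta}(\mu_{\infty})$, so that $\alpha f \in \mathcal{H}^{-\theta}(\mu_{\infty})$ for some $\alpha > 0$. For a nonzero scalar $c$, I would set $\beta = \alpha / |c| > 0$ and note that $|\beta(cf)| = \beta|c|\,|f| = \alpha|f| = |\alpha f|$ pointwise. Since $\theta$ is even (being a Young function), this gives $\theta(\beta cf) = \theta(\alpha f)$, so $\int_{\mathcal{B}} \theta(\beta cf)\,d\mu_{\infty}$ is Henstock integrable; hence $\beta(cf) \in \mathcal{H}^{-\theta}(\mu_{\infty})$ and therefore $cf \in \mathcal{H}^{\theta}(\mu_{\infty})$. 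The case $c = 0$ is immediate, since the zero function lies in the class.

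The substantive step is closure under addition, and here I would lean on the absolute convexity of $\mathcal{H}^{-\theta}(\mu_{\infty})$ proved earlier. Given $f, g \in \mathcal{H}^{\theta}(\mu_{\infty})$, choose $\alpha, \beta > 0$ with $\alpha f \in \mathcal{H}^{-\theta}(\mu_{\infty})$ and $\beta g \in \mathcal{H}^{-\theta}(\mu_{\infty})$. The key is to set
$$\gamma = \frac{\alpha\beta}{\alpha + \beta} > 0,$$
and then to write
$$\gamma(f+g) = \frac{\gamma}{\alpha}(\alpha f) + \frac{\gamma}{\beta}(\beta g).$$
The two coefficients satisfy $\frac{\gamma}{\alpha} + \frac{\gamma}{\beta} = \gamma\bigl(\frac{1}{\alpha} + \frac{1}{\beta}\bigr) = 1$, so in particular their absolute values sum to $\le 1$. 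By the absolute convexity theorem for $\mathcal{H}^{-\theta}(\mu_{\infty})$, this forces $\gamma(f+g) \in \mathcal{H}^{-\theta}(\mu_{\infty})$, and hence $f + g \in \mathcal{H}^{\theta}(\mu_{\infty})$.

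I do not expect a serious obstacle, since both closure properties collapse to earlier results; the one genuine insight is the choice $\gamma = \alpha\beta/(\alpha+\beta)$, engineered so that the coefficients in the decomposition of $\gamma(f+g)$ sum to exactly one and thus fall inside the range where absolute convexity applies. The only points requiring care are confirming that measurability of $f+g$ and $cf$ is preserved (clear, as sums and scalar multiples of measurable functions are measurable) and that the evenness of $\theta$ is what legitimizes the scalar-multiplication step. Assembling these observations yields that $\mathcal{H}^{\theta}(\mu_{\infty})$ is a linear space.
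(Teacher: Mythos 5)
Your proof is correct, but it is organized differently from the paper's. For closure under addition, the paper argues inline: it sets $\alpha = \min(\alpha_1,\alpha_2)$ and uses monotonicity and convexity of $\theta$ directly to get
$$\int_{\mathcal{B}} \theta\left(\tfrac{\alpha}{2}(f_1+f_2)\right)d\mu_{\infty} \le \tfrac{1}{2}\left[\int_{\mathcal{B}}\theta(\alpha_1 f_1)\,d\mu_{\infty} + \int_{\mathcal{B}}\theta(\alpha_2 f_2)\,d\mu_{\infty}\right],$$
so the witnessing factor is $\alpha/2$; you instead take the harmonic-mean factor $\gamma = \alpha\beta/(\alpha+\beta)$ and delegate the convexity computation to the earlier absolute convexity theorem for the class $\mathcal{H}^{-\theta}(\mu_{\infty})$, so no inequality needs to be re-derived. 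Both choices work; yours buys modularity (the result becomes a one-line corollary of the class-level theorem), while the paper's is self-contained, needing only convexity of $\theta$. The more substantive difference is in scalar multiplication: the paper obtains $2f$, then $nf$ for integers $n$, by iterating the addition step, and then simply asserts that $\beta f \in \mathcal{H}^{\theta}(\mu_{\infty})$ for an arbitrary scalar $\beta$ without spelling out the reduction (which would need monotonicity of $\theta$ to pass from integer multiples to arbitrary ones, plus evenness for negative scalars). Your direct rescaling, taking $\beta = \alpha/|c|$ and using evenness of the Young function so that $\theta(\beta c f) = \theta(\alpha f)$, handles every nonzero scalar — negative and fractional included — in one stroke, and so actually closes a gap the paper's write-up leaves implicit.
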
 
  \begin{proof}
  Let $ f_i \in \mathcal{H^{\theta}(\mu_{\infty})},~i= 1, 2. $ Then there exist $\alpha_i $ such that by definition $\alpha_i f_i \in \mathcal{H^{-\theta}(\mu_{\infty})}.$\\
  Let $ \alpha = \min (\alpha_1, \alpha_2). $ Then for $ \alpha > 0 $.
    $$ \int_{\mathcal{B}} \theta\left(\frac{\alpha}{2}(f_1 + f_2 )\right)d\mu_{\infty} \leq \frac{1}{2}\left[\int_{\mathcal{B}}\theta(\alpha_1f_1)d\mu_{\infty} + \int_{\mathcal{B}}\theta(\alpha_2 f_2)d\mu_{\infty}\right].$$
    Right hand side is Henstock-integrable. So, $ f_1 + f_2 \in \mathcal{H^{\theta}(\mu_{\infty})} $ as $ \frac{\alpha}{2} > 0. $\\
     If $ f = f_1 =f_2 $ implies $2f \in \mathcal{H^{\theta}(\mu_{\infty})}. $ So, $ nf \in \mathcal{H^{\theta}(\mu_{\infty})} $ for all integer $ n > 1.$\\
     Therefore $ \beta f \in \mathcal{H^{\theta}(\mu_{\infty})} $ for any scalar $\beta$. Hence $\mathcal{H^{\theta}(\mu_{\infty})}$ is a linear space.
  \end{proof} 
  \begin{defn}
    We define the Luxemburg  norm on $\mathcal{H^{\theta}(\mu_{\infty})}$ as follows: $$ {H_{\theta}} (f)= \inf \left\{ a >0 : H\int_{\mathcal{B}} \theta\left( \frac{f}{a}\right)d\mu_{\infty} \leq 1  \right\}.$$ 
  \end{defn}
  It is understood that $\inf(\emptyset)= + \infty.$ It is the gauge of the set $$U= \{ f~{\textit{measurable}}: H\int_{\mcB}\theta(f)d \m_\infty \leq 1 \}.$$
  \begin{prop}\label{prop320}
  	For each $f \in \mathcal{H}^{\theta}(\m_\infty),$ there is an $\alpha>0$ such that$$ B_{\theta}=  \{\alpha f = g \in \mathcal{H}^{-\theta}(\m_\infty) : H \int_{\mcB}\theta(g) d \m_\infty \leq 1 \}$$ is a circled solid subset of $\mathcal{H}^{- \theta}(\m_\infty).$
  \end{prop}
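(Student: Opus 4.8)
The plan is to read $B_\theta$ as the modular unit ball of the class, namely the set of those $g\in\mcH^{-\theta}(\m_\infty)$ satisfying $H\int_{\mcB}\theta(g)\,d\m_\infty\le 1$, with the scalar $\al$ serving only to exhibit a distinguished element $\al f$ of this set (so that $B_\theta\neq\es$). I would produce $\al$ first. Since $f\in\mcH^{\theta}(\m_\infty)$, the defining condition of the space supplies some $\al_0>0$ with $H\int_{\mcB}\theta(\al_0 f)\,d\m_\infty$ finite. Convexity of $\theta$ together with $\theta(0)=0$ gives the pointwise bound $\theta(t\al_0 f)\le t\,\theta(\al_0 f)$ for $t\in[0,1]$, so I would shrink $\al_0$ to $\al=t\al_0$ with $t$ small enough that $H\int_{\mcB}\theta(\al f)\,d\m_\infty\le 1$; then $\al f\in B_\theta$. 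Note that $\al f$ is still bounded, measurable and compactly supported, so it genuinely lies in $\mcH^{-\theta}(\m_\infty)$.

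For the circled (balanced) property, I take $g\in B_\theta$ and a scalar $\la$ with $|\la|\le 1$. Since $\theta$ is even and convex with $\theta(0)=0$, I obtain
\[
\theta(\la g)=\theta(|\la|\,|g|)\le|\la|\,\theta(|g|)\le\theta(|g|)=\theta(g).
\]
Here I would invoke the first theorem of the paper to guarantee that $\theta(\la g)$ is Henstock integrable (being $\theta$ applied to a measurable function), and observe that $\la g$ remains bounded, measurable and compactly supported, so $\la g\in\mcH^{-\theta}(\m_\infty)$. Monotonicity of the integral then yields $H\int_{\mcB}\theta(\la g)\,d\m_\infty\le H\int_{\mcB}\theta(g)\,d\m_\infty\le 1$, that is $\la g\in B_\theta$.

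For the solid property, I take $h\in B_\theta$ and a Henstock integrable $g$ with $|g|\le|h|$. Monotonicity of $\theta$ gives $\theta(|g|)\le\theta(|h|)$ pointwise, and the solidity result already established for the class (the second assertion of the absolute-convexity theorem, stating that $|f|\le|h|$ with $h\in\mcH^{-\theta}(\m_\infty)$ and $f$ Henstock integrable forces $f\in\mcH^{-\theta}(\m_\infty)$) places $g$ in the class. Integrating the pointwise inequality then gives $H\int_{\mcB}\theta(g)\,d\m_\infty\le H\int_{\mcB}\theta(h)\,d\m_\infty\le1$, so $g\in B_\theta$, which is exactly solidity.

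The step to watch is the legitimacy of monotonicity for the Henstock integral, which is only \emph{non-absolutely} convergent in general, so order comparison of integrals is not automatic. The saving observation, which I would make explicit, is that every integrand occurring here has the form $\theta(\cdot)\ge 0$: for nonnegative integrands Henstock integrability coincides with Lebesgue integrability, and hence the usual order monotonicity is available just as in the Lebesgue theory. Beyond this, the only care needed is the bookkeeping already flagged above, that $\la g$ and the dominated $g$ each inherit the boundedness, measurability and compact-support requirements built into the definition of $\mcH^{-\theta}(\m_\infty)$; both are preserved under scaling by $|\la|\le 1$ and under the domination $|g|\le|h|$, respectively.
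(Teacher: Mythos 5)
Your proof is correct, and it gets to the conclusion by a genuinely different route than the paper's. For producing $\alpha$, the paper takes an arbitrary sequence $a_n \to 0$, sets $\gamma_n=\min(\gamma,a_n)$, and invokes continuity of $\theta$ at the origin together with the dominated convergence theorem to get $H\int_{\mcB}\theta(\gamma_n f)\,d\m_\infty \to 0$, whence $\gamma_{n_0}f\in B_\theta$ for some $n_0$; you instead use the convexity estimate $\theta(t\al_0 f)\le t\,\theta(\al_0 f)$ for $t\in[0,1]$ and shrink $t$, which is more elementary and sidesteps having to justify a convergence theorem for the Henstock integral in this setting. For the circled and solid properties, the paper never verifies them directly: it reruns the additivity computation (the $\tfrac{\nu}{2}(f+g)$ estimate showing closure of $\mcH^{\theta}(\m_\infty)$ under sums and scalar multiples) and then simply asserts ``therefore the given set is solid and circled.'' Your direct verifications --- circledness from evenness of $\theta$ plus $\theta(|\la|x)\le|\la|\theta(x)$, and solidity from monotonicity of $\theta$ combined with the second assertion of the absolute-convexity theorem for $\mcH^{-\theta}(\m_\infty)$ --- supply exactly what that assertion leaves out, so on this point your argument is the more complete of the two, while the paper's linearity computation has the side benefit of being reusable later (it reappears in the triangle-inequality step of the normed-space theorem). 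Your caution about order-monotonicity of the Henstock integral is well placed, though it can be settled even more cheaply than by passing to the Lebesgue integral: for a nonnegative integrand every Riemann sum is nonnegative, so positivity, and hence by linearity monotonicity, of the Henstock integral is immediate.
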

  \begin{proof} 
  Let $f,g \in \mathcal{H}^{\theta}(\m_\infty).$ Then there exist $\alpha, \beta>0 $ such that $\alpha f, \beta g \in \mathcal{H}^{-\theta}(\m_\infty).$  Let $\nu = \min(\alpha, \beta)$. Then for $\nu>0 $ and using the fact of convexity and monotonicity of $\theta,$ we get $$ H \int_{\mcB}\left( \frac{\nu}{2}(f+g)\right) d \m_\infty \leq \frac{1}{2}[ H\int_{\mcB} \theta(\alpha f) d \m_\infty) + H\int_{\mcB} \theta(\beta g) d \m_\infty].$$ The right side is Henstock integrable. As $\frac{\nu}{2}> 0 ,$ gives $f + g \in \mathcal{H}^{\theta}(\m_\infty).$ In particular, with each $ f $ in $\mathcal{H}^{\theta}(\m_\infty),~2f \in \mathcal{H}^\theta(\m_\infty) $  and then $n f \in \mathcal{H}^\theta(\m_\infty)$ for all integers $n >1$, so that  $\gamma f \in \mathcal{H}^\theta(\m_\infty)$ for any scalar $\gamma.$ Therefore the given set is solid and circled. To hold $\gamma f \in \mathcal{H}^\theta(\m_\infty)$ for some $\gamma>0. $ Let $a_n \to 0 $ be arbitrary and set $\gamma_n= \min(\gamma, a_n).$ Then $\gamma_n \to 0 $ and $\theta(\gamma_n f) \leq \theta(\gamma f)$ and $\theta(\gamma_nf) \to 0 $ as $\theta $ is a continuous Young function. Dominated convergence theorem give us $H\int_{\mcB} \theta(\gamma_n f) \to 0 $ so that for some $n_0,$ we have $H\int_{\mcB}\theta(\gamma_{n_o} f ) d \m_\infty \leq 1.$ Thus $\gamma_{n_0} f \in B_{\theta}.$ 
  \end{proof} 
  \begin{thm}\label{prop24}
   The space $ ( \mathcal{H^{\theta}}(\mu_{\infty}), ||.||_{H_{\theta}}) $ is normed linear space.
  \end{thm}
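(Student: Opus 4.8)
The linearity of $\mathcal{H}^{\theta}(\mu_{\infty})$ has already been established in Proposition \ref{prop23}, so the plan is to verify that the Luxemburg functional $H_{\theta}$ satisfies the three norm axioms: nonnegativity with definiteness, absolute homogeneity, and the triangle inequality. Before checking these I would first record that $H_{\theta}(f)<\infty$ for every $f\in\mathcal{H}^{\theta}(\mu_{\infty})$: by definition there is an $\alpha>0$ with $H\int_{\mcB}\theta(\alpha f)\,d\m_\infty = M<\infty$, and writing $f/a = s(\alpha f)$ with $s=1/(a\alpha)\le 1$ for $a\ge 1/\alpha$, convexity together with $\theta(0)=0$ gives $\theta(f/a)\le s\,\theta(\alpha f)$, hence $H\int_{\mcB}\theta(f/a)\,d\m_\infty \le M/(a\alpha)\le 1$ once $a\ge M/\alpha$. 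Thus the defining set of the infimum is nonempty and $H_\theta(f)$ is finite.

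Nonnegativity is immediate since $H_\theta$ is an infimum of positive numbers, and $H_\theta(0)=0$ because $\theta(0)=0\le 1$ for every $a>0$. For definiteness I would argue the contrapositive: if $\m_\infty(\{|f|>\lambda\})>0$ for some $\lambda>0$, then by monotonicity of $\theta$ one has $\theta(|f|/a)\ge\theta(\lambda/a)$ on that set, so $H\int_{\mcB}\theta(f/a)\,d\m_\infty \ge \theta(\lambda/a)\,\m_\infty(\{|f|>\lambda\})$, which tends to $\infty$ as $a\to 0^{+}$ because $\theta(x)\to\infty$; this forces every admissible $a$ to be bounded below, contradicting $H_\theta(f)=0$. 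Hence $H_\theta(f)=0$ implies $f=0$ almost everywhere. Absolute homogeneity follows from the substitution $a=|\lambda|b$ in the defining set together with the evenness $\theta(x)=\theta(-x)$, which gives $\theta(\lambda f/a)=\theta(f/b)$ and therefore $H_\theta(\lambda f)=|\lambda|\,H_\theta(f)$ for $\lambda\neq 0$, the case $\lambda=0$ being covered by definiteness.

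The triangle inequality is the substantive step. The key observation is that $a\mapsto H\int_{\mcB}\theta(f/a)\,d\m_\infty$ is nonincreasing (since $\theta$ is increasing, $|f|/a$ decreases with $a$, and the integrand is nonnegative so the integral is monotone), whence $H\int_{\mcB}\theta(f/a)\,d\m_\infty\le 1$ holds for every $a>H_\theta(f)$. Fixing $\e>0$ and setting $a=H_\theta(f)+\e$, $b=H_\theta(g)+\e$, convexity of $\theta$ yields
\[
\theta\!\left(\frac{f+g}{a+b}\right)=\theta\!\left(\frac{a}{a+b}\cdot\frac{f}{a}+\frac{b}{a+b}\cdot\frac{g}{b}\right)\le \frac{a}{a+b}\,\theta\!\left(\frac{f}{a}\right)+\frac{b}{a+b}\,\theta\!\left(\frac{g}{b}\right).
\]
Integrating and using the two bounds gives $H\int_{\mcB}\theta((f+g)/(a+b))\,d\m_\infty\le a/(a+b)+b/(a+b)=1$, so that $H_\theta(f+g)\le a+b=H_\theta(f)+H_\theta(g)+2\e$; letting $\e\to 0$ finishes subadditivity.

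The main obstacle I anticipate is justifying the monotonicity and the term-by-term integration of the pointwise convex estimate for the Henstock integral rather than the Lebesgue integral. Because every integrand $\theta(\cdot)$ is nonnegative, the Henstock integral coincides with the Lebesgue integral here and inherits both monotonicity and linearity on the relevant inequalities; I would therefore isolate these as the precise properties being used and invoke the equivalence of the two integrals for nonnegative integrands noted in the Introduction. A secondary care point is the definiteness step, whose conclusion is equality almost everywhere, so the space is to be understood modulo $\m_\infty$-null functions.
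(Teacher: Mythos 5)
Your proposal is correct and follows essentially the same route as the paper's proof: definiteness via the blow-up of $\theta(\lambda/a)\,\mu_{\infty}(\{|f|>\lambda\})$ as the scaling parameter shrinks, homogeneity by substitution in the defining infimum, and the triangle inequality by the convex splitting $\frac{f+g}{a+b}=\frac{a}{a+b}\frac{f}{a}+\frac{b}{a+b}\frac{g}{b}$ with $a>H_\theta(f)$, $b>H_\theta(g)$. Your explicit justification of finiteness of $H_\theta$, of the monotonicity needed for ``$a>H_\theta(f)$ implies the integral is $\le 1$,'' and of the Henstock--Lebesgue equivalence for the nonnegative integrands only makes explicit steps the paper leaves implicit (the paper instead cites its Proposition \ref{prop320} to close the triangle inequality, which amounts to the same membership-in-the-unit-ball argument you use directly).
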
 
  \begin{proof}
   (a) If $ f =0 $ a.e., then $ H_{\theta}(f)=0.$\\
  Conversely, if $ H_{\theta}(f) = 0.$\\
  Let $ |f| > 0, $ on a set of positive measure. If possible then there exists a number $ \delta > 0 $ such that $ A =\{ p : |f(p)| \geq \delta \}$ satisfies $ \mu_{\infty}(A) > 0. $\\
  Now \begin{align*}
\theta(n\delta )\mu_{\infty}(A) = & \int_{A}\theta(n\delta)d\mu_{\infty} \\ & \leq \int_{A} \theta(nf)d\mu_{\infty} \\& \leq \int_{\mathcal{B}} \theta(nf)d\mu_{\infty},~n \geq 1.
\end{align*}  Since $\mu_{\infty}(A) > 0 $ and $\theta(n\delta )d\mu_{\infty} \to \infty$ as $ n \to \infty. $ This is not possible so, $\mu_{\infty}(A) =0.$ 
 Thus $ f =0 $ a.e. Therefore $ H_{\theta}(f) = 0 $ if and only if $ f =0 $ a.e.\\
 (b)  $ H_{\theta}(\alpha f) = |\alpha|H_{\theta}(f),~\alpha \in \mathbb{R} $ or $ \alpha \in \mathbb{C}.$\\
  Let $ \alpha \neq 0 .$
 \begin{align*}
 H_{\theta}(\alpha f) &= \inf\left\{ k> 0 : H\int_{ \mathcal{B}}\theta\left(\frac{\alpha f}{k}\right)d\mu_{\infty} \leq 1  \right\}\\& = |\alpha| \inf \left\{ \frac{k}{|\alpha|} > 0 : H\int_{\mathcal{B}}\theta\left(\frac{f}{\frac{k}{|\alpha|}}\right)d\mu_{\infty} \leq 1\right\} \\& = |\alpha|H_{\theta}(f).
\end{align*}  
(c)  $ H_{\theta}(f + g) \leq H_{\theta}(f) + H_{\theta}(g).$\\
Let $ f, g \in \mcH^{\theta}(\m_\infty),  a> H_\theta(f) $ and $b >H_\theta(g)$ Then $ 0<a< \infty$ and $0<b< \infty,$ and suppose $c=a+b >0.$ Since $f+g \in \mcH^\theta(\m_\infty), H_\theta(f+g) < \infty.$ Now 
\begin{align*}
H\int_{\mcB} \theta\left(\frac{f+g}{c}\right)d \m_\infty &= H\int_{\mcB} \theta\left[\frac{f}{a}.\frac{a}{c} + \frac{g}{b}.\frac{b}{c}\right]d\m_{\infty} \\& \leq \frac{a}{c} H\int_{\mcB} \theta\left(\frac{f}{a}\right)d \m_\infty + \frac{b}{c} H\int_{\mcB} \theta\left(\frac{g}{b}\right)d \m_\infty \\&\leq \frac{a}{c} + \frac{b}{c}\leq 1.
\end{align*} 
By Proposition \ref{prop320}, we have $\frac{1}{c}(f+g)\in B_{\theta}$ so that   $H_\theta(f+g) \leq c = a+b.$ If $a \to H_\theta(f) $ and $b\to H_\theta(g),$ we get the triangle inequality $ H_{\theta}(f + g) \leq H_{\theta}(f) + H_{\theta}(g).$ 
Hence  $(\mathcal{H^{\theta}}, ||.||_{H_{\theta}}) $ is a normed linear space.
    \end{proof}
     \begin{lemma}\label{lemma21}
      For $ \mu_{\infty}(\mathcal{B}) < \infty.$ Assume $\mu_{\infty}$ is bounded, then $\mathcal{H}^{\theta}(\m_{\infty}) \subset L_1(\mu_{\infty}), $ where the inclusion from $ \mathcal{H}^{\theta}$ to  $(L_1, ||.||_1) $ is continuous.
     \end{lemma}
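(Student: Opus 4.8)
The plan is to establish the single quantitative inequality $\|f\|_1 \le C\, H_\theta(f)$ for all $f \in \mcH^\theta(\m_\infty)$, where $C$ depends only on $\theta$ and $\m_\infty(\mcB)$; this simultaneously yields the set-theoretic inclusion $\mcH^\theta(\m_\infty) \ci L_1(\m_\infty)$ and the continuity of the embedding $(\mcH^\theta, H_\theta) \to (L_1, \|\cdot\|_1)$. The case $H_\theta(f) = 0$ is disposed of first: by part (a) of Theorem \ref{prop24} this forces $f = 0$ a.e., so $\|f\|_1 = 0$ and the inequality holds trivially. Hence I may assume $0 < H_\theta(f) < \infty$.

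First I would extract an eventual linear minorant for $\theta$. Since $\theta$ is convex with $\theta(0)=0$ and $\theta(x_1)>0$ for some $x_1>0$, writing $x_1 = \tfrac{x_1}{t}\,t + (1-\tfrac{x_1}{t})\,0$ and applying convexity gives $\theta(t) \ge c\,t$ for all $t \ge t_0$, with $c := \theta(x_1)/x_1 > 0$ and $t_0 := x_1$. If $\theta$ is an $N$-function this is immediate from $\theta(t)/t \to \infty$, but only convexity is actually needed here.

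Next, I fix any $a > H_\theta(f)$, so that by definition of the Luxemburg norm $H\!\int_{\mcB}\theta(|f|/a)\,d\m_\infty \le 1$. The integrand $\theta(|f|/a)$ is non-negative, so here I invoke the principle recalled in the introduction that a non-negative Henstock integrable function is Lebesgue integrable; this lets me read the bound as a genuine Lebesgue integral, $\int_{\mcB}\theta(|f|/a)\,d\m_\infty \le 1$, and it is the step on which the whole argument turns. I then split $\mcB = A \cup A^c$ with $A = \{x : |f(x)| \le a t_0\}$. On $A^c$ the minorant gives $|f|/a \le c^{-1}\theta(|f|/a)$, whence $\int_{A^c}|f|\,d\m_\infty \le (a/c)\int_{\mcB}\theta(|f|/a)\,d\m_\infty \le a/c$; on $A$ the bound $|f| \le a t_0$ together with $\m_\infty(\mcB) < \infty$ gives $\int_A |f|\,d\m_\infty \le a\,t_0\,\m_\infty(\mcB)$. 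Adding the two pieces shows $|f| \in L_1$ and $\|f\|_1 \le a\,(c^{-1} + t_0\,\m_\infty(\mcB))$.

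Letting $a \downarrow H_\theta(f)$ yields $\|f\|_1 \le (c^{-1} + t_0\,\m_\infty(\mcB))\,H_\theta(f)$, which is exactly the desired continuous inclusion with $C = c^{-1} + t_0\,\m_\infty(\mcB)$. The main obstacle is not the truncation estimate, which is routine, but the honest passage between the two integration theories: one must be sure that the hypothesis that $\theta(|f|/a)$ is Henstock integrable delivers Lebesgue integrability of $|f|$, and not merely Henstock integrability of $f$ (which could be non-absolute), for otherwise the conclusion $f \in L_1$ would fail. The non-negativity of $\theta(|f|/a)$, which forces absolute integrability, is precisely what rescues the argument.
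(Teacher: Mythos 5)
Your proof is correct, and it reaches the conclusion by a route that is structured differently from the paper's. Both arguments run on the same engine --- convexity of $\theta$ yields a linear-type lower bound, and finiteness of $\mu_{\infty}(\mathcal{B})$ absorbs the region where that bound is unavailable --- but the executions diverge. The paper uses a \emph{global} affine minorant $\theta(s)\ge us-v$ and then makes two separate arguments: first integrating $ua|f|\le\theta(af)+v$ to get the set inclusion, and second plugging the normalized function $f/(c\|f\|_1)$ into the modular and forcing it to be $\ge 1$ to conclude $c\|f\|_1\le\|f\|_{H_\theta}$. You instead truncate the domain at $\{|f|\le a t_0\}$, apply the linear minorant $\theta(t)\ge ct$ only where it holds, and obtain the single quantitative inequality $\|f\|_1\le \bigl(c^{-1}+t_0\,\mu_{\infty}(\mathcal{B})\bigr)\,H_\theta(f)$ by letting $a\downarrow H_\theta(f)$; this yields inclusion and continuity simultaneously, and it avoids the delicate boundary case in the paper's normalization step (where the modular of $f/(c\|f\|_1)$ equalling exactly $1$ does not quite force the norm inequality without an extra limiting argument). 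Your version also has a genuine advantage in rigor: you isolate and justify the passage from Henstock to Lebesgue integration --- non-negativity of $\theta(|f|/a)$ plus Henstock integrability gives Lebesgue integrability, hence absolute integrability of $|f|$ by domination --- which is precisely the point where the lemma could fail for a non-absolute integral, and which the paper's proof uses silently. One small implicit step you lean on (as does the paper): for $f\in\mathcal{H}^{\theta}(\mu_{\infty})$ one has $H_\theta(f)<\infty$, which follows from convexity since $\theta(\lambda\alpha f)\le\lambda\,\theta(\alpha f)$ for $0<\lambda\le 1$ makes the modular $\le 1$ for small enough $\lambda$; it is worth a sentence but is not a gap.
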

   \begin{proof}
   We have for $ u > 0 $ and $ v \geq 0 $ such that $ \theta(s) \geq us - v $ for all $ s \geq 0 $ implies $ us \leq \theta(s) + v.$\\
    Let $ f \in \mathcal{H^{\theta}(\mu_{\infty})}.$ Then for small enough $ a > 0 $ \begin{align*}
a \int_{\mathcal{B}}|f|d\mu_{\infty} & \leq \frac{1}{u}\int_{\mathcal{B}}[\theta(af) + v]d\mu_{\infty} \\& = \frac{1}{u}\int_{\mathcal{B}}\theta(af)d\mu_{\infty} + \frac{v\mu_{\infty}(\mathcal{B})}{u} \\ & < \infty.
\end{align*} 
Therefore $ f \in L_1(\mu_{\infty}),$ so $ \mathcal{H^{\theta}(\mu_{\infty})} \subset L_1(\mu_{\infty})$ and this inclusion is continuous since with $c >0$ taken sufficiently small for the inequality $\frac{u}{c} - v \m_\infty(\mcB) \geq 1 $ to hold, we have
\begin{align*}
H\int_{\mcB} \theta\left(\frac{f}{c||f||_1}\right)d \m_\infty &\geq \frac{u}{c}H\int_{\mathcal{B}}\frac{f}{||f||_1}d \m_\infty - v \m_\infty(\mcB) \\&= \frac{u}{c} - v \m_\infty(\mcB) \geq 1.
\end{align*}
Hence $c||f||_1 \leq ||f||_{H_\theta}.$
        \end{proof}
\begin{corollary} 
$ \mathcal{H}^{\theta}(\m_{\infty}) \subset H(\m_\infty). $
\end{corollary}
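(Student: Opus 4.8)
The plan is to obtain this inclusion by chaining the containment just established in Lemma~\ref{lemma21} with the elementary comparison between the Lebesgue and Henstock integrals recorded in the Introduction. First I would invoke Lemma~\ref{lemma21}, which (under the standing assumption that $\mu_\infty$ is bounded, i.e.\ $\mu_\infty(\mathcal{B})<\infty$) furnishes the continuous embedding $\mathcal{H}^{\theta}(\mu_\infty)\subset L_1(\mu_\infty)$. This reduces the corollary to the single inclusion $L_1(\mu_\infty)\subset H(\mu_\infty)$.

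For that second step I would appeal to the relation stated near the start of the paper: a function $f$ is Lebesgue integrable if and only if both $f$ and $|f|$ are Henstock--Kurzweil integrable. In particular the ``only if'' direction says that every $f\in L_1(\mu_\infty)$ is Henstock integrable, that is $f\in H(\mu_\infty)$; hence $L_1(\mu_\infty)\subset H(\mu_\infty)$. Concatenating the two containments then gives $\mathcal{H}^{\theta}(\mu_\infty)\subset L_1(\mu_\infty)\subset H(\mu_\infty)$, which is exactly the assertion.

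The only genuine point to watch is the consistency of hypotheses: Lemma~\ref{lemma21} is formulated for a finite (bounded) measure, so the first inclusion is applied in that regime, whereas $L_1\subset H$ holds with no restriction on $\mu_\infty$. Thus the argument is essentially a one-line concatenation of a proved lemma and a standard fact, and I do not expect any substantive obstacle; the main thing is simply to pass a given $f\in\mathcal{H}^{\theta}(\mu_\infty)$ through $L_1(\mu_\infty)$ before invoking the Lebesgue-to-Henstock implication.
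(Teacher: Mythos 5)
Your proof is correct and matches the paper's intent: the paper states this corollary immediately after Lemma~\ref{lemma21} with no written proof, precisely because the intended argument is the chaining $\mathcal{H}^{\theta}(\mu_\infty)\subset L_1(\mu_\infty)\subset H(\mu_\infty)$ that you give, using the lemma for the first inclusion and the Lebesgue-implies-Henstock fact from the Introduction for the second. Your remark about inheriting the finiteness hypothesis $\mu_\infty(\mathcal{B})<\infty$ from the lemma is a valid (and in fact necessary) point of care that the paper leaves implicit.
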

\begin{lemma}\label{lemma22}
 Let $ (f_n)_{n \geq 1} $ be a sequence in $ \mathcal{H^{\theta}(\mu_{\infty})} .$ Then the followings are equivalent:
 \begin{enumerate}
 \item[(i)]  $ \lim\limits_{n \to \infty} ||f_n||_{H_\theta} = 0 $
 \item[(ii)]For all $ a > 0, \lim\limits_{n \to \infty} \sup H\int_{\mathcal{B}} \theta(a f_n)d\mu_{\infty} \leq 1 $
 \item[(iii)]For all $ a>0, \lim\limits_{n \to \infty} H\int_{\mathcal{B}} \theta(a f_n)d\mu_{\infty} =0 $
 \end{enumerate}
\end{lemma}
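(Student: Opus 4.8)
The plan is to prove the three conditions equivalent by running the cycle (i) $\Rightarrow$ (iii) $\Rightarrow$ (ii) $\Rightarrow$ (i), with the convexity of the Young function $\theta$ (giving $\theta(\lambda x) \leq \lambda\theta(x)$ for $0 \leq \lambda \leq 1$, since $\theta(0)=0$) and the monotonicity of the modular $\rho(f) := H\int_{\mathcal{B}}\theta(f)\,d\mu_\infty$ in $|f|$ serving as the engine throughout. Before touching the three implications I would isolate one structural fact about the Luxemburg norm: for every $a' > \|f\|_{H_\theta}$ one has $H\int_{\mathcal{B}}\theta(f/a')\,d\mu_\infty \leq 1$. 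Indeed, by the definition of the infimum there is some $a''$ with $\|f\|_{H_\theta} \leq a'' < a'$ and $H\int_{\mathcal{B}}\theta(f/a'')\,d\mu_\infty \leq 1$; since $|f/a'| \leq |f/a''|$, monotonicity of the even function $\theta$ and of the Henstock integral on the nonnegative integrands give $H\int_{\mathcal{B}}\theta(f/a')\,d\mu_\infty \leq H\int_{\mathcal{B}}\theta(f/a'')\,d\mu_\infty \leq 1$. This is exactly the closure-of-$B_\theta$ property already used in Proposition \ref{prop320} and Theorem \ref{prop24}.

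For (i) $\Rightarrow$ (iii) I would fix $a > 0$ and $\varepsilon \in (0,1)$. Since $\|f_n\|_{H_\theta} \to 0$, for all large $n$ we have $\|a f_n\|_{H_\theta} = a\|f_n\|_{H_\theta} < \varepsilon < 1$; write $b_n = \|a f_n\|_{H_\theta}$. For any $c \in (b_n,1)$ the isolated fact gives $H\int_{\mathcal{B}}\theta(a f_n/c)\,d\mu_\infty \leq 1$, and convexity yields $\theta(a f_n) = \theta\big(c\cdot(a f_n/c)\big) \leq c\,\theta(a f_n/c)$, so $H\int_{\mathcal{B}}\theta(a f_n)\,d\mu_\infty \leq c$. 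Letting $c \downarrow b_n$ gives $H\int_{\mathcal{B}}\theta(a f_n)\,d\mu_\infty \leq b_n < \varepsilon$, and since $\varepsilon$ is arbitrary the modular tends to $0$ for each fixed $a$, which is (iii). The implication (iii) $\Rightarrow$ (ii) is immediate, a sequence tending to $0$ having $\limsup = 0 \leq 1$.

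For (ii) $\Rightarrow$ (i), the essential step is a factor-of-two gain from convexity. Fixing $\varepsilon > 0$ and applying (ii) with $a = 2/\varepsilon$ gives $\limsup_n H\int_{\mathcal{B}}\theta\big((2/\varepsilon) f_n\big)\,d\mu_\infty \leq 1$. Convexity gives $\theta\big((1/\varepsilon) f_n\big) = \theta\big(\tfrac12 (2/\varepsilon) f_n\big) \leq \tfrac12\,\theta\big((2/\varepsilon) f_n\big)$, whence $\limsup_n H\int_{\mathcal{B}}\theta\big((1/\varepsilon) f_n\big)\,d\mu_\infty \leq \tfrac12 < 1$. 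Thus for all large $n$ one has $H\int_{\mathcal{B}}\theta(f_n/\varepsilon)\,d\mu_\infty \leq 1$, which by the definition of the Luxemburg norm means $\|f_n\|_{H_\theta} \leq \varepsilon$; as $\varepsilon$ was arbitrary, $\|f_n\|_{H_\theta} \to 0$, which is (i).

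I expect the only delicate point to be the isolated structural fact, that is, legitimately passing from ``modular $\leq 1$ at the norm'' to ``modular $\leq 1$ above the norm'' within the Henstock framework; this rests on the monotonicity of the Henstock integral for the nonnegative integrands $\theta(\cdot)$ and, if one insists on evaluating the modular exactly at $a = \|f\|_{H_\theta}$, on a dominated-convergence passage of the type carried out in Proposition \ref{prop320}. Everything else is routine bookkeeping with the convexity inequality $\theta(\lambda x) \leq \lambda\theta(x)$.
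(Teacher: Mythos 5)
Your proof is correct, but it does not follow the paper's route — in fact, it repairs it. Both arguments are cycles of implications driven by the same convexity inequality $\theta(\lambda x)\le \lambda\,\theta(x)$ for $0\le\lambda\le 1$ (using $\theta(0)=0$), and your trivial leg (iii)$\Rightarrow$(ii) matches the paper's easy link between (ii) and (iii). The difference is in the two legs that involve the norm. The paper proves (i)$\Rightarrow$(ii) and (ii)$\Rightarrow$(i) by formally interchanging $\lim_{n\to\infty}$ with the infimum defining the Luxemburg norm, i.e.\ passing between $\lim_{n}\inf\{a>0: H\int_{\mathcal{B}}\theta(f_n/a)\,d\mu_{\infty}\le 1\}=0$ and $\inf\{a>0: \lim_{n}H\int_{\mathcal{B}}\theta(f_n/a)\,d\mu_{\infty}\le 1\}=0$ with no justification; that interchange is essentially the assertion being proved, so those two implications in the paper are formal manipulations rather than arguments. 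You replace both with genuine estimates: for (i)$\Rightarrow$(iii), the structural fact that the modular is $\le 1$ strictly above the norm (the same monotonicity point used in Proposition~\ref{prop320} and Theorem~\ref{prop24}) combined with $\theta(c\,y)\le c\,\theta(y)$ yields the quantitative bound $H\int_{\mathcal{B}}\theta(a f_n)\,d\mu_{\infty}\le \|a f_n\|_{H_\theta}$ once $\|a f_n\|_{H_\theta}<1$; for (ii)$\Rightarrow$(i), the factor-of-two trick pushes the $\limsup$ down to $\tfrac{1}{2}<1$, after which the definition of the Luxemburg norm applies verbatim. What your approach buys is rigor exactly where the paper has a gap, plus the useful byproduct that $\|f\|_{H_\theta}<1$ implies the modular is at most $\|f\|_{H_\theta}$; what the paper's version buys is only brevity. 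The one point to make explicit in a final write-up is the one you already flagged: comparing modulars of dominated functions requires monotonicity of the Henstock integral for nonnegative measurable integrands (where Henstock and Lebesgue integrability coincide), which is the same standard of rigor the paper itself adopts throughout.
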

\begin{proof}
For the proof of  (i) implies (ii):\begin{align*}& \lim\limits_{n \to \infty}||f_n||_{H_\theta} = 0 \\
&\implies \lim\limits_{ n \to \infty} \inf \left\{ a>0: H\int_{\mathcal{B}}\theta\left(\frac{f_n}{a}\right)d\mu_{\infty} \leq 1\right\} = 0 \\
& \implies \inf\left\{ a>0 : \lim\limits_{n \to \infty}H\int_{\mathcal{B}}\theta\left(\frac{f_n}{a}\right)d\mu_{\infty} \leq 1\right\}= 0.\end{align*}
 Therefore, $ \lim\limits_{n \to \infty} \sup ~H\int_{\mathcal{B}}\theta(af_n)d\mu_{\infty} \leq 1.$\\
 For (iii) implies (ii) is obvious as  $ \theta$ is convex and $ \theta(0) = 0 $ for all $ x \geq 0 $ and $ 0 < \epsilon \leq 1$
 \begin{align*}
 \theta(x) &= \theta( 1- \epsilon) 0 + \epsilon\left(\frac{x}{\epsilon}\right) \\&\leq (1-\epsilon) \theta(0) + \epsilon \theta\left(\frac{x}{\epsilon}\right). \end{align*}    
    Thus $ \theta(x) \leq \epsilon \theta(\frac{x}{\epsilon}), x \geq 0,~0 < \epsilon \leq 1 .$\\
    From here (ii) implies (iii) follows easily.\\   
  For (ii) implies (i):\begin{align*}
 & \lim\limits_{n \to \infty} \sup H\int_{\mathcal{B}}\theta(af_n)d\mu_{\infty} \leq 1\\
&\implies \inf\left\{ a>0 : \lim\limits_{n \to \infty}H\int_{\mathcal{B}}\theta\left(\frac{f_n}{a}\right)d\mu_{\infty} \leq 1\right\}= 0 \\
  &\implies \lim\limits_{ n \to \infty} \inf\left\{ a>0 : H\int_{\mathcal{B}}\theta\left(\frac{f_n}{a}\right)d\mu_{\infty} \leq 1 \right\} = 0 \\
&\implies \lim\limits_{n \to \infty}||f_n||_{H_\theta} = 0. \end{align*} 
    \end{proof}
    \begin{thm} 
    $( \mathcal{H}^{\theta}(\mu_{\infty}), ||.||_{H_\theta} ) $ is Banach space.
    \end{thm}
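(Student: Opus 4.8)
The plan is to prove completeness, since Theorem \ref{prop24} already establishes that $(\mathcal{H}^{\theta}(\m_\infty), ||\cdot||_{H_\theta})$ is a normed linear space. The strategy is the classical one for Orlicz-type spaces: show that an arbitrary Cauchy sequence admits a norm-convergent subsequence, using the continuous embedding into $L_1(\m_\infty)$ from Lemma \ref{lemma21} to manufacture a candidate limit, and then using the modular characterisation of norm convergence in Lemma \ref{lemma22} to upgrade $L_1$-convergence to convergence in $||\cdot||_{H_\theta}$. Recall that in a normed space it suffices for a Cauchy sequence to have a convergent subsequence, so no generality is lost in passing to subsequences.

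First I would take a Cauchy sequence $(f_n)$ in $\mathcal{H}^{\theta}(\m_\infty)$ and extract indices $n_k$ with $||f_{n_{k+1}} - f_{n_k}||_{H_\theta} \le 2^{-k}$. By Lemma \ref{lemma21} the inclusion $\mathcal{H}^{\theta}(\m_\infty) \to (L_1(\m_\infty), ||\cdot||_1)$ is continuous, so $(f_{n_k})$ is Cauchy in $L_1(\m_\infty)$; completeness of $L_1$ then supplies a limit $f \in L_1(\m_\infty)$, and after passing to a further subsequence we may assume $f_{n_k} \to f$ almost everywhere. In particular $f$ is measurable, which is the minimal requirement for membership questions in $\mathcal{H}^{\theta}(\m_\infty)$.

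Next I would identify $f$ as the norm limit. Fix $a>0$ and $\varepsilon>0$. By the Cauchy property there is $K$ such that for all $j,k \ge K$ the norm $||f_{n_j}-f_{n_k}||_{H_\theta}$ is small enough that $H\int_{\mcB}\theta\big(a(f_{n_j}-f_{n_k})\big)\,d\m_\infty \le 1$, using the equivalence of small norm and small modular from Lemma \ref{lemma22}. Since $\theta$ is continuous and $f_{n_j}-f_{n_k} \to f - f_{n_k}$ a.e. as $j\to\infty$, Fatou's lemma gives, for every fixed $k \ge K$,
\[
H\int_{\mcB}\theta\big(a(f-f_{n_k})\big)\,d\m_\infty \;\le\; \liminf_{j\to\infty} H\int_{\mcB}\theta\big(a(f_{n_j}-f_{n_k})\big)\,d\m_\infty \;\le\; 1.
\]
Hence $\limsup_{k} H\int_{\mcB}\theta\big(a(f-f_{n_k})\big)\,d\m_\infty \le 1$ for every $a>0$, which by Lemma \ref{lemma22} is precisely the assertion that $||f-f_{n_k}||_{H_\theta} \to 0$. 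The same estimate shows $a(f-f_{n_k}) \in \mathcal{H}^{-\theta}(\m_\infty)$ for large $k$, so $f = (f-f_{n_k}) + f_{n_k} \in \mathcal{H}^{\theta}(\m_\infty)$ by linearity (Proposition \ref{prop23}). As the original sequence is Cauchy and has a subsequence converging to $f$, the whole sequence converges to $f$, and completeness follows.

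The main obstacle will be legitimising the use of the classical limit theorems, namely the extraction of an a.e.-convergent subsequence and Fatou's lemma, inside the Henstock integral, which is a non-absolute integral and does not a priori obey these theorems. The resolution is that every integrand appearing above, $\theta\big(a(f-f_{n_k})\big) \ge 0$, is non-negative, and for non-negative measurable functions the Henstock integral coincides with the Lebesgue integral; the $L_1$-structure provided by Lemma \ref{lemma21} then furnishes both the a.e.-convergent subsequence and the identification of the measurable limit $f$. One should also note that the finite-measure hypothesis of Lemma \ref{lemma21} is what licenses the embedding, so in the unbounded setting the argument is carried out after localising to sets of finite measure.
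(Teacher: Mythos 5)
Your proposal follows essentially the same route as the paper's proof: use the continuous embedding of Lemma \ref{lemma21} (localised to finite-measure pieces) to get an $L_1$-Cauchy sequence, extract an a.e.-convergent subsequence via completeness of $L_1$, apply Fatou's lemma to the non-negative integrands $\theta\bigl(a(f - f_{n_k})\bigr)$, and invoke Lemma \ref{lemma22} to convert the modular bound into norm convergence and membership of $f$ in $\mathcal{H}^{\theta}(\mu_{\infty})$. The differences are cosmetic — the paper keeps the full sequence $f_m$ against the subsequence in the Fatou step instead of your pass-to-a-subsequence-then-use-the-Cauchy-property step, and it spells out the countable partition and diagonal extraction that you only sketch — while your explicit justification that Fatou's lemma is legitimate for the Henstock integral here (non-negative integrands, where Henstock and Lebesgue integration coincide) is a point the paper leaves implicit.
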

    \begin{proof}
      Let $(f_n)_{n \geq 1 } $ be a Cauchy sequence in $ \mathcal{H}^{\theta}(\mu_{\infty}). $    We find a countable partition $ ( E_k:~k \geq 1 \} $ of measurable subset of $\mathcal{B}$ such that $\mu_{\infty}(E_k) < \infty $ for all $ k \geq 1 .$\\
    Now, in restriction $ {\mu_{\infty}}_{k}(.) = \mu_{\infty}(E_k \cap .).$\\
    As $ (f_n)_{n \geq 1} $ is a Cauchy sequence in $ \mathcal{H}^{\theta}(E_k, {\mu_{\infty}}_{k}).$  Using Lemma \ref{lemma21}, $(f_n)_{n \geq 1 } $ is a Cauchy sequence in $ L_1(\mu_{\infty}).$ 
    As $ L_1(\mu_{\infty}) $ is complete, it is convergent in $ L_1(\mu_{\infty}) $ and one can extract a subsequence which converges $\m_{\infty_{k}}$-a.e. pointwise to $f$ in $E_k.$ Using the diagonal extraction procedure, we can extract a subsequence $(f_{n_{k}})_{k \geq 1}$ which converges $\m_\infty$-a.e. pointwise to $f $ on the whole space.\\
    Let $ a > 0 .$ Then there exists integer $ N_a$ and Lemma \ref{lemma22}, $ H\int_{\mathcal{B}}\theta(a(f_m -f_n))d\mu_{\infty} \leq 1 $ for all $ m,n \geq N_a $\\
    With Fatou's Lemma this gives  $$  H\int_{\mathcal{B}}\theta(a(f_m -f))d\mu_{\infty} \leq \lim\limits_{k \to \infty} \inf H \int_{\mathcal{B}}\theta(a(f_m -f_{n_{k}}))d\mu_{\infty}  \leq 1,~\forall m \geq N_a.$$
    So, $ f_m -f \in \mathcal{H}^{\theta}(\m_{\infty}).$ But $ f_m \in \mathcal{H}^{\theta}(\m_{\infty}),$ so $ f \in \mathcal{H}^{\theta}(\mu_{\infty}) .$  Moreover, $$\lim_{ m \to \infty}\inf H\int_{\mcB}\theta(a(f_m -f)d \m_\infty \leq 1,~\forall a>0,$$ we have $\lim\limits_{ m \to \infty}||f_m - f||_{\mcH_\theta} =0.$   So, it is a Banach space.
        \end{proof}
    \begin{corollary}\label{cor37}
     $( \mathcal{H}^{\theta}(\mu_{\infty}), ||.||_{H_\theta})$ is a Banach space if $ H(\m_\infty) = L_1(\mu_{\infty}).$
     \end{corollary}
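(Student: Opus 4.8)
The plan is to derive the completeness of $(\mathcal{H}^{\theta}(\mu_\infty), ||\cdot||_{H_\theta})$ by identifying it, under the standing hypothesis $H(\m_\infty)=L_1(\mu_\infty)$, with the classical Orlicz space $L^\theta(\mu_\infty)$, whose completeness is already available from Rao--Ren \cite{Mm}. An equally short route is to observe that the theorem just proved already establishes that $\mathcal{H}^{\theta}(\mu_\infty)$ is a Banach space with no extra hypothesis, so the statement is really a specialization; but the identification route is more informative, since it explains the precise sense in which the H-Orlicz space reduces to the familiar one.

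The crux of the identification is the observation that the defining integrand is nonnegative. Membership $f \in \mathcal{H}^{\theta}(\mu_\infty)$ is governed solely by the Henstock integrability of $\theta(\alpha f)$ for some $\alpha>0$, and $\theta$ takes values in $\mathbb{R}^+$, so $\theta(\alpha f)\ge 0$. For a nonnegative function, Henstock integrability is equivalent to Lebesgue integrability and the two integrals coincide; hence the condition $\theta(\alpha f)\in H(\m_\infty)$ is the same as $\theta(\alpha f)\in L_1(\mu_\infty)$, which is exactly the membership condition for the classical Orlicz space. I would then check that the Luxemburg gauges match: since the integrands $\theta(f/a)$ are likewise nonnegative, the infima $\inf\{a>0 : H\int_{\mcB}\theta(f/a)\,d\mu_\infty \le 1\}$ and $\inf\{a>0 : \int_{\mcB}\theta(f/a)\,d\mu_\infty \le 1\}$ are literally the same number, so the H-Orlicz norm and the classical Luxemburg norm agree. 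Under the hypothesis $H(\m_\infty)=L_1(\mu_\infty)$ this identification of $(\mathcal{H}^{\theta}(\mu_\infty),||\cdot||_{H_\theta})$ with $(L^\theta(\mu_\infty),||\cdot||_{L^\theta})$ is isometric and onto.

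Completeness then transfers directly: $L^\theta(\mu_\infty)$ is a Banach space by the classical theory, and an isometric copy of a Banach space is again a Banach space, so $(\mathcal{H}^{\theta}(\mu_\infty),||\cdot||_{H_\theta})$ is complete. I do not expect a genuine obstacle here; the only point needing care is the nonnegativity observation that collapses the Henstock--Lebesgue distinction on the integrands $\theta(\alpha f)$ and $\theta(f/a)$, together with a correct handling of the existential quantifier ``for some $\alpha>0$'' in the definition. Once these are in place, the conclusion follows either from \cite{Mm} or, more economically, as an immediate corollary of the preceding completeness theorem.
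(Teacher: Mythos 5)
Your proposal is correct, and your ``equally short route'' is in fact the paper's entire argument: the paper gives no proof of this corollary at all, because the theorem immediately preceding it establishes that $(\mathcal{H}^{\theta}(\mu_{\infty}), ||\cdot||_{H_\theta})$ is complete with no hypothesis whatsoever, so the corollary is a pure specialization in which the assumption $H(\m_\infty)=L_1(\mu_\infty)$ is never actually used. Your primary route---identifying $\mathcal{H}^{\theta}(\mu_\infty)$ isometrically with the classical Orlicz space $L^\theta(\mu_\infty)$ and importing completeness from Rao--Ren \cite{Mm}---is genuinely different and proves more. Its key step is sound: the integrands $\theta(\alpha f)$ and $\theta(f/a)$ are nonnegative, and by the equivalence the paper itself records in the introduction ($f$ is Lebesgue integrable iff both $f$ and $|f|$ are Henstock integrable), Henstock and Lebesgue integrability coincide for nonnegative functions with equal values of the integral, so the membership conditions and the Luxemburg gauges agree literally. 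Note, however, that this identification argument also never invokes the hypothesis $H(\m_\infty)=L_1(\mu_\infty)$: it is unconditional, which is consistent with the paper's own introductory remark that ``our space is equivalent to the classical Orlicz space,'' but it simultaneously exposes that the hypothesis in the corollary is idle and, more seriously, that the paper's later claim that $C_0^\infty(\R^n)$ is dense in $\mathcal{H}^\theta(\R^n)$ yet not in the classical Orlicz space cannot coexist with an isometric identification of the two spaces. In short: your proof is valid; the paper's implicit proof is the one-line specialization you mention; and your route buys an isometry that is more informative, at the price of highlighting an inconsistency in the paper's framing rather than in your argument.
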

 \begin{thm} 
 The space $( \mathcal{H}^{\theta}(\mu_{\infty}), ||.||_{H_\theta})$ is a symmetric space.
 \end{thm}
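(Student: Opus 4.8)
The plan is to verify the two defining conditions of a symmetric space directly from the Luxemburg-norm formula, exploiting the fact that the integrands $\theta(f/a)$ are nonnegative. Since a nonnegative Henstock integrable function is Lebesgue integrable, each Henstock integral occurring in the definition of $H_\theta$ coincides with the corresponding Lebesgue integral, and the classical distribution-function machinery becomes available. Recall that I must establish (a) if $|f|\le |g|$ and $g\in\mathcal{H}^{\theta}(\mu_\infty)$ then $f\in\mathcal{H}^{\theta}(\mu_\infty)$ with $\|f\|_{H_\theta}\le\|g\|_{H_\theta}$, and (b) if $f$ and $g$ are equimeasurable and $g\in\mathcal{H}^{\theta}(\mu_\infty)$ then $f\in\mathcal{H}^{\theta}(\mu_\infty)$ with $\|f\|_{H_\theta}=\|g\|_{H_\theta}$.

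First I would dispose of condition (a), the normed-ideal-lattice property. Suppose $|f|\le|g|$ with $g\in\mathcal{H}^{\theta}(\mu_\infty)$. Since $\theta$ is even and increasing, for every $a>0$ one has $\theta(f/a)=\theta(|f|/a)\le\theta(|g|/a)=\theta(g/a)$ pointwise; both sides are nonnegative and measurable, and by the theorem on Henstock integrability of $\theta(f)$ the function $\theta(f/a)$ is Henstock integrable. As these integrands are nonnegative, their Henstock integrals agree with their Lebesgue integrals, so monotonicity of the integral yields
\[
H\!\int_{\mathcal{B}}\theta(f/a)\,d\mu_\infty\le H\!\int_{\mathcal{B}}\theta(g/a)\,d\mu_\infty .
\]
Hence every $a>0$ admissible in the defining set for $g$ is admissible for $f$, so that set for $f$ contains the one for $g$; taking infima gives $H_\theta(f)\le H_\theta(g)<\infty$, which simultaneously shows $f\in\mathcal{H}^{\theta}(\mu_\infty)$ and the norm inequality. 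This is the norm-level refinement of the lattice statement already proved for the H-Orlicz class.

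The substance is condition (b), rearrangement invariance. Here I would show that the scalar $\int_{\mathcal{B}}\theta(|f|/a)\,d\mu_\infty$ depends on $f$ only through its distribution function $\eta_{|f|}$. Fix $a>0$; because the integrand is nonnegative, the Henstock integral equals the Lebesgue integral, and I would invoke the layer-cake representation
\[
\int_{\mathcal{B}}\theta(|f|/a)\,d\mu_\infty=\int_{0}^{\infty}\mu_\infty\bigl(\{\theta(|f|/a)>s\}\bigr)\,ds .
\]
Using that $\theta$ is increasing and left-continuous, the super-level set rewrites as $\{\theta(|f|/a)>s\}=\{|f|>a\,\theta^{-1}(s)\}$, with $\theta^{-1}$ the generalized inverse in the sense introduced earlier, whose measure is $\eta_{|f|}(a\,\theta^{-1}(s))$. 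When $f$ and $g$ are equimeasurable, $\eta_{|f|}=\eta_{|g|}$, so the two integrals coincide for every $a>0$. Therefore the sets $\{a>0:H\!\int_{\mathcal{B}}\theta(f/a)\,d\mu_\infty\le 1\}$ and $\{a>0:H\!\int_{\mathcal{B}}\theta(g/a)\,d\mu_\infty\le 1\}$ are identical, their infima agree, and hence $f\in\mathcal{H}^{\theta}(\mu_\infty)$ with $\|f\|_{H_\theta}=\|g\|_{H_\theta}$.

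The main obstacle I anticipate is the careful handling of this layer-cake step in the Henstock framework: one must first certify that each nonnegative integrand $\theta(|f|/a)$ is Lebesgue, not merely Henstock, integrable, which follows since a nonnegative Henstock integrable function is Lebesgue integrable, so that the classical distribution-function identity applies; and then one must manage the generalized inverse $\theta^{-1}$ at the at most countably many jump points of $\theta$, where the rewriting of the super-level set relies on the left-continuity of $\theta$ to avoid boundary discrepancies. Once this reduction is secured, (a) and (b) follow from monotonicity of the integral and the equimeasurability hypothesis, and, together with the Banach-space property established previously, they show that $(\mathcal{H}^{\theta}(\mu_\infty),\|\cdot\|_{H_\theta})$ is a symmetric space.
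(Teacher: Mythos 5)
Your proposal is correct, and its core mechanism --- rearrangement invariance via the distribution-function/layer-cake identity --- is the same one the paper uses; the difference is in execution, and yours is the more careful of the two. The paper's proof opens by writing $H_\theta(f)=H\int_{\mathcal{B}}\theta(|f|)\,d\mu_{\infty}$, i.e., it identifies the Luxemburg norm with the modular, which is not true in general (the norm is an infimum over scalings, not the modular itself), and then runs the layer-cake chain on that single integral. You avoid this defect by fixing the scaling $a>0$, showing that the modular $\int_{\mathcal{B}}\theta(|f|/a)\,d\mu_{\infty}$ depends on $f$ only through $\eta_{|f|}$, and only then passing to the infimum defining $\|\cdot\|_{H_\theta}$ via the coincidence of the admissible sets --- that is the argument the paper evidently intends, done correctly. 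You also prove condition (a), the normed-ideal-lattice property, at the norm level by the containment of admissible sets, whereas the paper relegates it to ``it is easy to check''; and your explicit bridge from Henstock to Lebesgue integration (a nonnegative Henstock integrable function is Lebesgue integrable) is exactly the remark needed to legitimize the classical layer-cake formula here, which the paper never states. The one detail you flag as delicate --- the behaviour of the generalized inverse $\theta^{-1}$ at jump points --- can in fact be bypassed entirely: since $\theta(\cdot/a)$ is a fixed increasing left-continuous function, equimeasurability of $|f|$ and $|g|$ directly implies equimeasurability of $\theta(|f|/a)$ and $\theta(|g|/a)$, and the layer-cake formula then gives equality of the modulars without ever inverting $\theta$.
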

 \begin{proof}
     If  $ f $ is bounded measurable function, then 
   \begin{align*}
 H_{\theta}(f) & = H\int_{\mathcal{B}} \theta(|f|)d\mu_{\infty}\\    &= H\int_{\mathcal{B}}\eta_{\theta o |f|}d\mu_{\infty} \\& = H\int_{\mathcal{B}} \mu_{\infty}\{\theta(|f|) > x \} d\mu_{\infty}\\&= H\int_{\mathcal{B}} \mu_{\infty}\{|f| > \theta^{-1}(x) \}dx \\&= H\int_{\mathcal{B}} \mu_{\infty}\{|f| > y \} \theta(y)\\&=H\int_{\mathcal{B}}\eta_{|f|}d\theta.
    \end{align*} 
     Let $ f \in \mathcal{H}^{\theta}(\mu_{\infty})$ and   let $ g $ be equimeasurable to $ f,$ that is $\eta_{|f|} =\eta_{|g|}.$ Then  we have 
 \begin{align*}
    H_{\theta}(f) &= H\int_{\mathcal{B}} \eta_{|f|}d\theta\\
    & = H\int_{\mathcal{B}}\eta_{|g|}d\theta\\&= H_{\theta}(g).
    \end{align*} 
    Therefore $ g \in \mathcal{H}^{\theta}(\mu_{\infty}) $ so, $||f||_{H_{\theta}}= ||g||_{H_{\theta}}. $ Hence, this space is symmetric. \\
    It is easy to checked that  $( \mathcal{H}^{\theta}(\mu_{\infty}), ||.||_{H_\theta})$ is a normed ideal lattice.
\end{proof}
\begin{lemma}\label{lemma23}
 For any bounded measurable function $ g $ on $\mathcal{B}, ||g||_{H_{\theta}} =0 $ if and only if $ g = 0 $ a.e.
 \end{lemma}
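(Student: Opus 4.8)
The plan is to establish the two implications separately; this is essentially a self-contained restatement of part (a) of the proof of Theorem~\ref{prop24}, recorded here for bounded measurable functions. Recall that $||g||_{H_\theta}=H_\theta(g)=\inf\{a>0:H\int_{\mcB}\theta(g/a)\,d\m_\infty\le 1\}$.

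First I would dispose of the easy direction $g=0$ a.e. $\Rightarrow ||g||_{H_\theta}=0$. Since $\theta$ is an Orlicz (Young) function with $\theta(0)=0$, if $g=0$ a.e. then $\theta(g/a)=0$ a.e., and hence $H\int_{\mcB}\theta(g/a)\,d\m_\infty=0\le 1$, for every $a>0$. Thus the admissible set defining the Luxemburg functional is all of $(0,\infty)$, its infimum is $0$, and $||g||_{H_\theta}=0$.

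For the converse, suppose $||g||_{H_\theta}=0$. The first observation is that $a\mapsto H\int_{\mcB}\theta(g/a)\,d\m_\infty$ is non-increasing: because $\theta$ is even and increasing and $|g|/a$ decreases as $a$ grows, the integrand decreases pointwise, so the integral is monotone. Consequently $\{a>0:H\int_{\mcB}\theta(g/a)\,d\m_\infty\le 1\}$ is an up-set, and since its infimum is $0$ it must equal all of $(0,\infty)$. Taking $a=1/n$ gives $H\int_{\mcB}\theta(ng)\,d\m_\infty\le 1$ for every $n\ge 1$. Then I would argue by contradiction: if $g\ne 0$ on a set of positive measure, there is a $\delta>0$ with $A=\{p\in\mcB:|g(p)|\ge\delta\}$ and $\m_\infty(A)>0$, and monotonicity of $\theta$ together with monotonicity of the integral over subsets yields
$$\theta(n\delta)\,\m_\infty(A)=\int_A\theta(n\delta)\,d\m_\infty\le\int_A\theta(n|g|)\,d\m_\infty\le H\int_{\mcB}\theta(ng)\,d\m_\infty\le 1$$
for all $n\ge 1$. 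Since $\theta(x)\to\infty$ as $x\to\infty$, the left side is unbounded while the right side is bounded by $1$, a contradiction. Hence $\m_\infty(A)=0$ for every $\delta>0$, and writing $\{g\ne 0\}=\bigcup_n\{|g|\ge 1/n\}$ as a countable union of null sets gives $g=0$ a.e.

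The step I expect to be the main obstacle is justifying that the infimum being $0$ forces $H\int_{\mcB}\theta(ng)\,d\m_\infty\le 1$ for \emph{every} $n$, rather than merely along some sequence $a_k\downarrow 0$; this rests entirely on the monotonicity of the Luxemburg integrand, which in turn uses only that $\theta$ is increasing and even. A secondary technical point is that the subset-integral inequality and the divergence $\theta(n\delta)\to\infty$ are legitimate within the Henstock framework: the former follows from monotonicity of $\theta$, and all the integrands $\theta(ng)$ are Henstock integrable for bounded measurable $g$ by the first theorem of the paper.
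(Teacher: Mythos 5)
Your proof is correct, but it takes a genuinely different route from the paper's own proof of this lemma. The paper proves Lemma \ref{lemma23} by applying Lemma \ref{lemma22}(iii) to the constant sequence $f_n\equiv g$: it chains the equivalences $\|g\|_{H_\theta}=0 \iff H\int_{\mathcal{B}}\theta(g/a)\,d\mu_\infty=0$ for all $a>0 \iff \theta(g/a)=0$ a.e.\ for all $a>0 \iff g=0$ a.e. You instead give a self-contained direct argument, which, as you note, mirrors part (a) of the proof of Theorem \ref{prop24}: from $\|g\|_{H_\theta}=0$ you deduce $H\int_{\mathcal{B}}\theta(ng)\,d\mu_\infty\le 1$ for every $n$, and then derive a contradiction from $\theta(n\delta)\,\mu_\infty(A)\le 1$ and $\theta(n\delta)\to\infty$. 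The trade-off: the paper's route is shorter but rests entirely on Lemma \ref{lemma22}, whose own proof (an unjustified interchange of limit and infimum) is the weakest link in that section; your route avoids Lemma \ref{lemma22} altogether, and moreover supplies precisely the justification that the paper's Theorem \ref{prop24}(a) argument leaves implicit --- that the admissible set $\{a>0: H\int_{\mathcal{B}}\theta(g/a)\,d\mu_\infty\le 1\}$ is an up-set by monotonicity, so infimum $0$ forces the bound for \emph{every} $a$, not merely along a sequence. Your up-set observation also quietly resolves the integrability issue (for $a$ above any admissible point, $\theta(g/a)$ is a non-negative measurable function dominated by an integrable one, hence Lebesgue and therefore Henstock integrable), so the appeal to the paper's first theorem at the end is not even needed. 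What the paper's version buys in exchange is only the stronger intermediate conclusion that the integrals vanish identically rather than being bounded by $1$; the final statement is the same.
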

 \begin{proof}
    As  $ ||g||_{H_{\theta}} =0 $ if and only if $ \inf\{ a>0:~H\int_{\mathcal{B}}\theta(\frac{g}{a})d\mu_{\infty} \leq 1 \} = 0.$ 
    This gives
    \begin{align*} ||g||_{H_{\theta}} = 0 & \iff 
     H\int_{\mathcal{B}}\theta\left(\frac{g}{a}\right)d\mu_{\infty} = 0  \mbox{~for~all~}  a > 0 ~( \mbox{by Lemma \ref{lemma22}(iii)}) \\
   & \iff \theta\left(\frac{g}{a}\right) = 0~a.e. \mbox{~for~all~}  a> 0 \\
   &\iff g=0~\mu_{\infty}-a.e.
   \end{align*}
    \end{proof}
     \begin{lemma}\label{lemma24}
      If $~0 < ||f||_{H_{\theta}} < \infty $ then $ H\int_{I \subseteq \mathcal{B}} \theta\left(\frac{f}{||f||_{H_\theta}}\right) \leq A,$ where $ A = \lim S\left (\theta\left(\frac{f}{||f||_{H_\theta}}\right), D_k\right),$\\ $D_k $ is a sequence of tagged partition.
      \end{lemma}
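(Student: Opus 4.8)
The plan is to reduce the statement to two ingredients: first, that the integrand $g:=\theta\!\left(f/||f||_{H_\theta}\right)$ is genuinely Henstock integrable, so that $A$ is well defined as its integral realized through Riemann sums; and second, that this integral does not exceed $1$, which is the defining feature of the Luxemburg functional.

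First I would abbreviate $\la:=||f||_{H_\theta}$, which by hypothesis satisfies $0<\la<\iy$, and set $g:=\theta(f/\la)$. Since $f/\la$ is measurable and $\theta$ is a Young (hence Borel, even, increasing) function, the opening theorem of the paper guarantees that $g$ is Henstock integrable on each $I\ci\mcB$. By the very definition of the Henstock integral there is a real number $A=H\int_I g$ such that for every $\epsilon>0$ one can choose a gauge $\delta$ with $|S(g,D)-A|<\epsilon$ for every $\delta$-fine tagged partition $D$. Taking $\epsilon=1/k$ produces gauges $\delta_k$, and by Cousin's lemma a $\delta_k$-fine tagged partition $D_k$ exists for each $k$; the corresponding Riemann sums satisfy $S(g,D_k)\to A$, which is precisely the sequence $(D_k)$ and the limit $A$ named in the statement. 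Hence $H\int_I g=A$, which already yields the inequality $H\int_{I\ci\mcB}\theta\!\left(f/||f||_{H_\theta}\right)\le A$.

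To pin down the useful bound $A\le 1$, I would extract it from the Luxemburg definition. Because $\theta$ is even and increasing, the map $a\mapsto H\int_{\mcB}\theta(f/a)\,d\m_\infty$ is non-increasing in $a>0$, so the sublevel set $\{a>0:\,H\int_{\mcB}\theta(f/a)\,d\m_\infty\le 1\}$ is an interval with left endpoint $\la$; consequently every $a>\la$ lies in it. Choosing $a_k\downarrow\la$ with $a_k>\la$, evenness and monotonicity give $\theta(f/a_k)=\theta(|f|/a_k)\uparrow\theta(|f|/\la)=g$ pointwise, and monotone convergence (equivalently the Fatou estimate already invoked in the completeness proof above) yields $H\int_{\mcB}g\,d\m_\infty\le\limsup_k H\int_{\mcB}\theta(f/a_k)\,d\m_\infty\le 1$.

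The main obstacle is exactly this passage to the limit as $a_k\downarrow\la$: the Henstock integral is only conditionally convergent in general, so termwise limits of Riemann sums need not commute with the integral. This is circumvented here because $g$ and each $\theta(f/a_k)$ are non-negative and the sequence is monotone increasing; for such integrands Henstock integrability coincides with Lebesgue integrability, and the classical monotone convergence theorem applies without the pathologies of the non-absolute theory. Care must also be taken that $\la$ is approached from within the sublevel set, which is why the one-sided limit $a_k\downarrow\la$ together with the left-continuity of $\theta$ is used rather than evaluating directly at $a=\la$.
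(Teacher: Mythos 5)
Your argument is correct and takes essentially the same route as the paper: both proofs identify $A$ with the Henstock integral of $\theta\left(\frac{f}{||f||_{H_\theta}}\right)$ by producing gauges $\delta_k$ (you via the definition of the integral, the paper via the equivalent Cauchy criterion of Swartz) and then, by Cousin's lemma, a sequence of $\delta_k$-fine tagged partitions $D_k$ whose Riemann sums converge to $A$, so the stated inequality holds (in fact with equality). Your second half, showing $A\le 1$ from the Luxemburg definition by letting $a_k\downarrow ||f||_{H_\theta}$ and using monotone convergence, is sound but goes beyond what the lemma asserts and is not in the paper's proof; it is, however, exactly the bound the paper implicitly needs later when it sets $A=\frac{1}{2}$ in the proof of Theorem \ref{th312}.
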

      \begin{proof}
      Let $ ||f||_{H_{\theta}} > 0. $  Then 
      $ \int_{\mathcal{B}}\theta\left(\frac{f}{||f||_{H_\theta}}\right)d\mu_{\infty} $ is Henstock integrable. By (Ch 1, Theorem 6  \cite{Cs}), for every $ k $ there exists a gauge $\delta_k$ on $ I \subseteq \mathcal{B}$ such that $ D_1, D_2 \leq \delta_k,$ then $$\left| S\left(\theta\left(\frac{f}{||f||_{H_\theta}}\right), D_1\right)- S\left(\theta\left(\frac{f}{||f||_{H_\theta}}\right), D_2\right)\right| < \frac{1}{k}.$$
    For each $ k ,$ let $ D_k < \delta_k,$ If $ k>j,$ we have $$\left| S\left(\theta\left(\frac{f}{||f||_{H_\theta}}\right), D_k\right)- S\left(\theta\left(\frac{f}{||f||_{H_\theta}}\right), D_j\right)\right| < \frac{1}{j}.$$
    Thus $\left\{S\left(\theta\left(\frac{f}{||f||_{H_\theta}}\right), D_k\right)\right\} $ is a Cauchy sequence in $ \mathbb{R} .$\\
    Let $ A = \lim\limits_{k} S\left(\theta\left(\frac{f}{||f||_{H_\theta}}\right), D_k\right) .$ Then $ \left|S\left(\theta\left(\frac{f}{||f||_{H_\theta}}\right), D_k\right)-A\right| \leq \frac{1}{k}$ for all $k .$\\
    Let $ \epsilon > 0. $ There exists natural number $'n'$ such that $ \frac{1}{n}< \frac{\epsilon}{2}.$\\
    Assume $  D < \delta_n.$     Then
     \begin{align*}
   &\left|S\left(\theta\left(\frac{f}{||f||_{H_\theta}}\right), D\right)-A\right|\\
    &\leq \left| S\left(\theta\left(\frac{f}{||f||_{H_\theta}}\right), D\right)- S\left(\theta\left(\frac{f}{||f||_{H_\theta}}\right), D_n\right)\right|+\left|S\left(\theta\left(\frac{f}{||f||_{H_\theta}}\right), D_n\right)-A\right| \\
    &< \frac{1}{n} + \frac{1}{n}< \epsilon.
    \end{align*}
    Hence $ H\int_{I\subseteq\mathcal{B}} \theta\left(\frac{f}{||f||_{H_\theta}}\right) d\mu_{\infty} \leq A .$
    \end{proof}
    \begin{thm}\label{th312}
     For all $ f \in \mathcal{H}^{\theta}(\m_{\infty}),~g \in \mathcal{H}^{\Phi}(\m_{\infty}), $ where $ \Phi $ is complementary function of $\theta,$ then $$H\int_{\mathcal{B}}|fg|d\mu_{\infty} \leq ||f||_{H_{\theta}}||g||_{H_{\Phi}} .$$
     \end{thm}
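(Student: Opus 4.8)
The engine of the proof is the pointwise Young inequality
$$uv \le \theta(u) + \Phi(v), \qquad u,v \ge 0,$$
which holds precisely because $\Phi$ is the complementary function of the Young function $\theta$. First I would clear the degenerate cases: if $\|f\|_{H_\theta} = 0$ or $\|g\|_{H_\Phi} = 0$, then by Lemma \ref{lemma23} the corresponding function vanishes $\m_\infty$-a.e., so $|fg| = 0$ a.e. and both sides are zero. Hence I may assume $0 < \|f\|_{H_\theta} < \infty$ and $0 < \|g\|_{H_\Phi} < \infty$.

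The next step is to normalize. Setting $F = f/\|f\|_{H_\theta}$ and $G = g/\|g\|_{H_\Phi}$, the definition of the Luxemburg norm as an infimum together with Lemma \ref{lemma24} and the left-continuity and convexity of $\theta,\Phi$ gives the modular bounds
$$H\int_{\mcB}\theta(|F|)\,d\m_\infty \le 1, \qquad H\int_{\mcB}\Phi(|G|)\,d\m_\infty \le 1.$$
In particular $\theta(|F|)$ and $\Phi(|G|)$ are Henstock integrable, hence so is their sum. Applying Young's inequality at each point $x \in \mcB$ yields the pointwise estimate $|F(x)G(x)| \le \theta(|F(x)|) + \Phi(|G(x)|)$.

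I would then integrate this inequality. Since $|FG|$ is dominated by the Henstock-integrable function $\theta(|F|) + \Phi(|G|)$, the comparison principle for the Henstock integral (Lemma 9 of \cite{Cs}) makes $|FG|$ Henstock integrable, and monotonicity together with additivity of $H\int$ give
$$H\int_{\mcB}|FG|\,d\m_\infty \le H\int_{\mcB}\theta(|F|)\,d\m_\infty + H\int_{\mcB}\Phi(|G|)\,d\m_\infty \le 2.$$
Undoing the normalization produces $H\int_{\mcB}|fg|\,d\m_\infty \le 2\,\|f\|_{H_\theta}\|g\|_{H_\Phi}$, which is already the Hölder inequality up to a universal constant.

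The delicate point is the constant: the argument just sketched yields $2\,\|f\|_{H_\theta}\|g\|_{H_\Phi}$, and passing to the sharp bound $\|f\|_{H_\theta}\|g\|_{H_\Phi}$ is not automatic for two Luxemburg gauges. The route I would take is homogeneity combined with the equality case of Young's inequality: replacing $(F,G)$ by $(\la F,\la^{-1}G)$ keeps $H\int_{\mcB}|FG|\,d\m_\infty$ fixed while trading the two modular integrals $H\int_{\mcB}\theta(\la|F|)\,d\m_\infty$ and $H\int_{\mcB}\Phi(\la^{-1}|G|)\,d\m_\infty$ against one another, and one then chooses $\la>0$ to balance them so that their sum is driven down to $1$, using that equality $uv=\theta(u)+\Phi(v)$ holds along the graph of $\theta'$. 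Carrying out this optimization at the level of the Henstock–Riemann sums of Lemma \ref{lemma24}, rather than for an ordinary Lebesgue integral, is the one genuinely nontrivial step and where I expect the main obstacle to lie.
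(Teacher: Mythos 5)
Your proposal follows the same skeleton as the paper's own proof: the degenerate cases are cleared with Lemma \ref{lemma23}, the pointwise Young inequality $st\le\theta(s)+\Phi(t)$ is applied to the normalized functions $F=f/\|f\|_{H_\theta}$, $G=g/\|g\|_{H_\Phi}$, and integration yields $H\int_{\mcB}|FG|\,d\m_\infty\le 2$. Up to that point your argument is correct, and in fact tighter than the paper's: you invoke the modular bound $H\int_{\mcB}\theta(|F|)\,d\m_\infty\le 1$ directly, whereas the paper invokes Lemma \ref{lemma24} and only obtains the bound $A+A$, where $A$ is by definition the value (limit of Riemann sums) of the very integral $H\int_{\mcB}\theta(|F|)\,d\m_\infty$.

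The genuine gap is exactly the step you flagged as delicate, and it cannot be closed, because the inequality with constant $1$ and the Luxemburg norm on \emph{both} factors is false in general. Take $\theta(t)=\Phi(t)=t^2/2$, a pair of mutually complementary $N$-functions, and $f=g=\chi_{[0,1]}$ on $\R$: then $\|f\|_{H_\theta}=\|g\|_{H_\Phi}=1/\sqrt 2$, while $H\int_{\mcB}|fg|\,d\m_\infty=1=2\|f\|_{H_\theta}\|g\|_{H_\Phi}$, so the constant $2$ produced by your argument is sharp. Your proposed rescaling $(F,G)\mapsto(\la F,\la^{-1}G)$ cannot repair this: in the same example the sum of modulars is $H\int_{\mcB}\theta(\la|F|)\,d\m_\infty+H\int_{\mcB}\Phi(\la^{-1}|G|)\,d\m_\infty=\la^{2}+\la^{-2}\ge 2$, with minimum $2$ attained at $\la=1$; in general that optimization reproduces the Orlicz (Amemiya) norm on one factor, which can be as large as twice the Luxemburg norm --- which is precisely where the factor $2$ lives. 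You should also know that the paper does not close this gap either: after reaching $2A\|f\|_{H_\theta}\|g\|_{H_\Phi}$ it simply declares ``in particular $A=\tfrac12$ gives'' the claimed bound, but $A$ is determined by $f$ and is not a parameter one may choose. So the statement you actually proved --- H\"older's inequality with constant $2$ --- is the correct, provable version; the constant-$1$ statement as printed is not.
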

     \begin{proof}
     If $ ||f||_{H_{\theta}} =0,~||g||_{H_{\Phi}} = 0 .$
    Then by Lemma \ref{lemma23}, gives the result.
      Let $ 0 < ||f||_{H_{\theta}},~0< ||g||_{H_{\Phi}}.$ By Young's inequality, we get
    $$ st \leq \theta(s) + \Phi(t);~s, t \geq 0.$$
    Let $ s = \frac{f}{||f||_{H_{\theta}}},~t= \frac{g}{||g||_{H_{\Phi}}}.$ Then we have   
    \begin{align*}
    &H\int_{\mathcal{B}}\frac{f}{||f||_{H_{\theta}}}\frac{g}{||g||_{H_{\Phi}}}d\mu_{\infty}\\
    & \leq  H\int_{\mathcal{B}}\theta\left(\frac{f}{||f||_{H_{\theta}}}\right)d\mu_{\infty} + H\int_{\mathcal{B}}\Phi\left(\frac{g}{||g||_{H_{\Phi}}}\right)d\mu_{\infty}\\
    & \leq A + A.
    \end{align*}
    Therefore $ \int_{\mathcal{B}} fg d\mu_{\infty} \leq 2A||f||_{H_\theta}||g||_{H_\Phi}.$\\
    In particular $ A = \frac{1}{2}$ gives $ H\int_{\Omega} |fg| d\mu_{\infty} \leq ||f||_{H_\theta}||g||_{H_\Phi}.$
\end{proof}
\section{Denseness  of $C_{0}^{\infty}(\mathbb{R}^n)$}
\begin{lemma}\label{lemma25}
  Let  $\M $ be an element of $ C_{0}^{\infty} (\mathbb{R}^n)$  satisfying $ \M \geq 0 $ and $ \int \M(t)d \m_\infty(t) =1 .$ Define a  sequence $ (\M_k)_{k\ge 1}$ of elements of $ C_{0}^{\infty} (\mathbb{R}^n)$ by $ \M_{k}(t) =k\M(kt).$ Let $ \theta $ be an $N$-function and $ f \in \mathcal{H}^{\theta}(\mu_{\infty}), $ then the convolution $ \M_{k} \ast f \in \mathcal{H}^{\theta}(\mu_{\infty}) $ and $||\M_{k}\ast f - f||_{H_{\theta}} \to 0 $ as $ k \to \infty .$
  \end{lemma}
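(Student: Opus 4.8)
The plan is to run the classical mollifier argument, with convexity of $\theta$ (Jensen's inequality) as the main engine and the reductions of Lemma \ref{lemma22} converting Luxemburg-norm convergence into convergence of modulars. Write $f_y$ for the translate $f_y(x)=f(x-y)$. I first dispose of membership. Since $\M_k\geq 0$ and $\int\M_k\,d\m_\infty=1$, the measure $\M_k(y)\,dy$ is a probability measure, so for the $\alpha>0$ witnessing $f\in\mathcal{H}^{\theta}(\m_\infty)$, Jensen's inequality gives pointwise
$$\theta\big(\alpha(\M_k\ast f)(x)\big)\leq \int \M_k(y)\,\theta\big(\alpha f(x-y)\big)\,dy.$$
Integrating in $x$ and interchanging the order of integration — legitimate by Tonelli, since $\theta\geq 0$ and $\theta(\alpha f)$, being nonnegative and Henstock integrable, is Lebesgue integrable — together with translation invariance of $\m_\infty$ yields $H\int_{\mcB}\theta(\alpha(\M_k\ast f))\,d\m_\infty\leq H\int_{\mcB}\theta(\alpha f)\,d\m_\infty<\infty$. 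Hence $\M_k\ast f\in\mathcal{H}^{\theta}(\m_\infty)$.

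For the convergence, by Lemma \ref{lemma22} it is enough to show $H\int_{\mcB}\theta\big(a(\M_k\ast f-f)\big)\,d\m_\infty\to 0$ for every $a>0$. Using $\int\M_k\,d\m_\infty=1$ I would write $(\M_k\ast f-f)(x)=\int\M_k(y)\big[f(x-y)-f(x)\big]\,dy$ and apply Jensen once more, obtaining
$$\theta\big(a(\M_k\ast f-f)(x)\big)\leq \int \M_k(y)\,\theta\big(a[f(x-y)-f(x)]\big)\,dy.$$
Integrating in $x$ and applying Fubini reduces the problem to
$$H\int_{\mcB}\theta\big(a(\M_k\ast f-f)\big)\,d\m_\infty\leq \int \M_k(y)\,g_a(y)\,dy,\qquad g_a(y):=H\int_{\mcB}\theta\big(a[f_y-f]\big)\,d\m_\infty.$$

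It then remains to prove the continuity of translation $g_a(y)\to 0$ as $y\to 0$, and to feed this into the concentration of the mollifier: changing variables $z=ky$ gives $\int\M_k(y)g_a(y)\,dy=\int\M(z)g_a(z/k)\,dz$, and since $\M$ has compact support and $g_a$ is bounded near the origin, dominated convergence sends this to $g_a(0)=0$. For the continuity of translation I would argue first for $f$ bounded with compact support: there, translation continuity in $L_1(\m_\infty)$ gives $f_y-f\to 0$ in measure, while $\theta(a[f_y-f])$ is dominated by the constant $\theta(2a\,\|f\|_\infty)$ on a fixed compact set of finite measure, so dominated convergence yields $g_a(y)\to 0$. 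This step is non-circular, as it uses only density of $C_c$ in $L_1$, not the denseness theorem we are heading toward.

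The main obstacle is extending continuity of translation to general $f\in\mathcal{H}^{\theta}(\m_\infty)$, where $\theta(a[f_y-f])$ need no longer be dominated. The clean resolution is to invoke $\theta\in\Delta_2$: under this hypothesis the modular and the Luxemburg norm define the same convergence (Lemma \ref{lemma22}), and density of $C_c$ in $\|\cdot\|_{H_\theta}$ can be obtained directly by Lusin's theorem and simple-function approximation (no mollifiers, hence no circularity), after which the three-term split $\|f_y-f\|_{H_\theta}\leq 2\|f-h\|_{H_\theta}+\|h_y-h\|_{H_\theta}$ against a continuous compactly supported $h$ closes the argument; without $\Delta_2$ one must instead exploit the bounded, compactly supported structure of the underlying class $\mathcal{H}^{-\theta}(\m_\infty)$ to keep the dominating function available. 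Assembling the membership bound, the Jensen reduction, the continuity of translation, and the concentration of $\M_k$ then gives $\|\M_k\ast f-f\|_{H_\theta}\to 0$.
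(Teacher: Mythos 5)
Your proposal is correct in substance but takes a genuinely different route from the paper's. The paper argues by duality: it pairs $\M_k\ast f-f$ against an arbitrary $g$ in the complementary space $\mathcal{H}^{\Phi}(\m_\infty)$ with $\|g\|_{H_\Phi}=1$, applies the H\"older-type inequality (Theorem \ref{th312}) to get $\|\M_k\ast f-f\|_{H_\theta}\le 2\int\|f_t-f\|_{H_\theta}\,\M_k(t)\,d\m_\infty(t)=2\int\|f_{t/k}-f\|_{H_\theta}\,\M(t)\,d\m_\infty(t)$, and then finishes by \emph{asserting}, without proof, that this last integral can be made small --- i.e.\ it takes continuity of translation in the Luxemburg norm for granted. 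You instead work at the modular level: Jensen's inequality against the probability measure $\M_k(y)\,dy$ plus Tonelli gives both membership and the bound by $\int\M_k(y)g_a(y)\,dy$, and Lemma \ref{lemma22} converts modular convergence into norm convergence; crucially, you then actually \emph{prove} the translation-continuity step (convergence in measure plus domination by $\theta(2a\|f\|_\infty)$ on a fixed set of finite measure) for bounded, compactly supported $f$. Note that under this paper's definitions your ``hard case'' largely evaporates: $\mathcal{H}^{\theta}(\m_\infty)$ is built from the class $\mathcal{H}^{-\theta}(\m_\infty)$, whose members are by definition bounded with compact support, so the case you prove in full already covers every $f\in\mathcal{H}^{\theta}(\m_\infty)$ and no appeal to $\Delta_2$ is needed. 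Your closing caveat is nonetheless the more honest account of the mathematics: in a classical Orlicz space without $\Delta_2$, translation continuity (hence the lemma, hence the density theorem it feeds) fails outside the heart $E^{\theta}$, which is precisely the difficulty hidden in the paper's one-line assertion. What the paper's route buys is brevity, at the cost of resting on the duality representation of the norm and an unproved key step; what yours buys is a self-contained, duality-free argument that exposes and fills that step.
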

  \begin{proof}
   Let $ \Phi $ be the complementary $N$-function of $ \theta$ and let $ g \in \mathcal{H}^{\Phi}(\mu_{\infty}) $ with $ ||g||_{H_\Phi} = 1.$\\
    Then
     \begin{align*}
     &\int|\M_{k}\ast f(x) -f(x)||g(x)|d\m_\infty(x) \\
     &= \int|\M_k\ast f(x)- \int\M(t)d\mu_{\infty}(t) f(x)||g(x)|d\m_\infty(x)\\
     &=\int|\int\M_k(t)f(x-t)d\mu_{\infty}(t) - \int\M(t)d\mu_{\infty}(t) f(x)||g(x)|d\m_\infty(x)\\
     &\le\int|\int\M_k(t)f(x-t)d\mu_{\infty}(t) - \int\M_k(t)d\mu_{\infty}(t) f(x)||g(x)|d\m_\infty(x)\\
     & \leq \int\left\{\int|f(x-t) -f(x)||g(x)|d \m_\infty(x)\right\}\M_{k}(t)d\m_\infty(t) \\& \leq 2 \int ||f_t-f||_{H_\theta} \M_k(t)d \m_\infty(t) ~(\mbox{by Theorem \ref{th312}}), 
     \end{align*} 
 where $ f_t(x) = f(x-t).$\\
 By assumption $ ||g||_{H_\Phi} = 1,$ therefore by Theorem \ref{th312}, we have 
 \begin{align*}
 ||\M_{k} \ast f-f||_{H_\theta} & \leq 2 \int||f_{t} - f||_{H_\theta} \M_k(t)d \m_\infty (t)  \\ & =  2 \int||f_{\frac{t}{k}}-f||_{H_\theta} \M(t)d \m_\infty(t).
\end{align*} 
Since, $ f \in \mathcal{H}^{\theta}(\mu_{\infty})$ and $\M $ has compact support so, for every $ \epsilon > 0 $ there exists $ k$ sufficiently large such that
\begin{align*}
\int||f_{\frac{t}{k}}-f||_{H_\theta} \M(t)d\m_\infty(t) & \leq \epsilon\int \M(t)d\m_\infty(t)\\
& = \epsilon.
\end{align*}  
 Therefore $||\M_{k} \ast f-f||_{H_\theta} \to 0 $ as $ k \to \infty.$ It is clear that  $\mathbb{M}_k\ast f\in \mathcal{H}^{\theta}(\mu_{\infty}).$ 
 \end{proof}
 \begin{thm}
 If $\mathcal{B} = \mathbb{R}^n,$ then $C_{0}^{\infty}(\mathbb{R}^n) $ is dense in $  \mathcal{H}^{\theta}( \mathbb{R}^n)=\mathcal{H}^{\theta}( \mathbb{R}^n, \mu_{\infty}).$
 \end{thm}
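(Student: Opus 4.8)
The plan is to approximate an arbitrary $f\in\mathcal{H}^{\theta}(\mathbb{R}^n)$ in the Luxemburg norm $\|\cdot\|_{H_\theta}$ by a two-stage procedure: first truncate $f$ to a compactly supported member of $\mathcal{H}^{\theta}$, then regularize that truncation by convolution with a mollifier. Since an element of $C_0^{\infty}(\mathbb{R}^n)$ is precisely a function that is simultaneously smooth and compactly supported, these two stages together produce a $C_0^{\infty}$ approximant. The mollification half is handed to us by Lemma \ref{lemma25}, so the work concentrates on the truncation half.

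For the truncation step, fix $\epsilon>0$ and set $g_N=f\cdot\mathbf{1}_{B[0,N]}$. Each $g_N$ is measurable, compactly supported, and satisfies $|g_N|\le|f|$; writing $f\in\mathcal{H}^{\theta}$ as $\alpha f\in\mathcal{H}^{-\theta}$ for some $\alpha>0$ and noting $|\alpha g_N|\le|\alpha f|$ with $g_N$ Henstock integrable (it is a restriction of $f\in L_1$), we get $g_N\in\mathcal{H}^{\theta}(\mu_{\infty})$. To show $\|g_N-f\|_{H_\theta}\to 0$, I would invoke Lemma \ref{lemma22}: it suffices to verify that for every $a>0$, $H\!\int_{\mathbb{R}^n}\theta\big(a(f-g_N)\big)\,d\mu_{\infty}\to 0$. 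Since $\theta$ is even and $f-g_N=f\,\mathbf{1}_{B[0,N]^c}$, the integrand equals $\theta(af)\,\mathbf{1}_{B[0,N]^c}$, which decreases pointwise to $0$ as $N\to\infty$. Because $\mathcal{H}^{\theta}(\mu_{\infty})\subset H(\mu_{\infty})=L_1(\mu_{\infty})$ (Lemma \ref{lemma21} and its corollary), the function $\theta(af)$ is genuinely Henstock integrable, and the absolute continuity of the Henstock integral forces these tail integrals to vanish. Choose $N$ with $\|g_N-f\|_{H_\theta}<\epsilon/2$ and put $g=g_N$.

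For the mollification step, take $\mathbb{M}\in C_0^{\infty}(\mathbb{R}^n)$ as in Lemma \ref{lemma25}, with support contained in some $B[0,R]$, and form $\mathbb{M}_k\ast g$. By Lemma \ref{lemma25}, $\mathbb{M}_k\ast g\in\mathcal{H}^{\theta}(\mu_{\infty})$ and $\|\mathbb{M}_k\ast g-g\|_{H_\theta}\to 0$, so pick $k$ with $\|\mathbb{M}_k\ast g-g\|_{H_\theta}<\epsilon/2$. Since $\mathbb{M}_k$ is smooth with support in $B[0,R/k]$ and $g$ is supported in $B[0,N]$, the convolution $\mathbb{M}_k\ast g$ is of class $C^{\infty}$ with support in $B[0,N+R/k]$, hence lies in $C_0^{\infty}(\mathbb{R}^n)$. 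The triangle inequality then yields $\|f-\mathbb{M}_k\ast g\|_{H_\theta}\le\|f-g\|_{H_\theta}+\|g-\mathbb{M}_k\ast g\|_{H_\theta}<\epsilon$, establishing density.

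I expect the truncation step to be the real obstacle. In a classical Orlicz space lacking the $\Delta_2$-condition the Luxemburg norm fails to be absolutely continuous, truncations need not converge, and indeed $C_0^{\infty}$ is \emph{not} dense; this is exactly the phenomenon the paper contrasts against. The feature that rescues the argument in the Henstock setting is the embedding $\mathcal{H}^{\theta}(\mu_{\infty})\subset H(\mu_{\infty})=L_1(\mu_{\infty})$, which guarantees that the modular $H\!\int\theta(af)\,d\mu_{\infty}$ is a finite Henstock integral for the relevant scalings, so that the dominated/absolute-continuity behaviour of the Henstock integral delivers the tail convergence for every $a>0$ needed to apply Lemma \ref{lemma22}. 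Verifying this modular convergence, rather than the convolution estimate, is where the substance of the proof resides.
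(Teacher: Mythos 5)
Your overall architecture --- truncate to a compactly supported element of $\mathcal{H}^{\theta}$, then mollify via Lemma \ref{lemma25}, then use the triangle inequality --- is essentially the paper's own: the paper likewise reduces to compactly supported $f$ (via cutoff functions $\mathbb{M}_n$, written there, confusingly, as convolutions) and then concludes from Lemma \ref{lemma25}; its excursion through distributional derivatives $D^{\alpha}$ is superfluous for the statement. The real difference is that the paper merely \emph{asserts} the reduction to compact support ("Clearly $\mathbb{M}_n \ast f \to f$"), while you try to prove it --- and that attempted proof is where your argument breaks.

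The gap is concrete: to apply Lemma \ref{lemma22}(iii) to $f-g_N=f\,\mathbf{1}_{B[0,N]^c}$ you must show $H\int_{\mathcal{B}}\theta(af)\,\mathbf{1}_{B[0,N]^c}\,d\mu_{\infty}\to 0$ for \emph{every} $a>0$, and for that you need $\theta(af)$ to be Henstock integrable for \emph{every} $a>0$. Membership $f\in\mathcal{H}^{\theta}(\mu_{\infty})$ gives this only for \emph{some} $\alpha>0$ --- that is the definition of the space --- and your appeal to Lemma \ref{lemma21} does not close the distance: the embedding $\mathcal{H}^{\theta}\subset L_1$ says that $f$ itself is integrable, not that the composition $\theta(af)$ is. Absolute continuity of the Henstock integral applies only to functions already known to be integrable, so it cannot manufacture finiteness of $H\int\theta(af)\,d\mu_{\infty}$ at scales $a$ larger than the $\alpha$ supplied by the definition; for such $a$ every tail integral may be $+\infty$, and the truncations $g_N$ then fail to converge to $f$ in $\|\cdot\|_{H_{\theta}}$. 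This is not a technicality but precisely the classical non-$\Delta_2$ phenomenon you yourself cite: the norm closure of the bounded compactly supported functions is the Morse--Transue heart $E^{\theta}$, a proper subspace of the Orlicz space when $\Delta_2$ fails, and nothing in the paper's Henstock framework (Lemmas \ref{lemma21} and \ref{lemma22} included) supplies the missing integrability. To repair the step you would need either the $\Delta_2$-condition or a genuinely new argument special to this paper's definitions; the paper itself offers neither, since it silently skips the step you tried to prove.
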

 \begin{proof}
  Suppose  $ f \in \mathcal{H}^{\theta}(\mu_{\infty}) $ has  compact support. Let us assume a sequence $( \M_n) $ of elements of $ C_{0}^{\infty} $ such that $ \M_{n}(x) = 1 $ for $ x \leq n $ and $ \M_{n}(x) = 0 $ for $ x \geq 2n.$ \\
   Clearly $ \M_n \ast f $ is a function with compact support, and $\M_n \ast f  \to f \in \mathcal{H}^{\theta}(\mathbb{R}^n) .$\\
   Let $ (\M_k) $ be the sequence defined in Lemma \ref{lemma25}. Define    \begin{align*}
   f_k(x) & = f \ast \M_k(x) \\
   & = \int f(t)\M_k(x-t)d\m_\infty(t).
   \end{align*}
   Derivative of distributional  function is again distributional function, (page 33 \cite{MA} and Lemma 5.7 of \cite{JJ}) shows that $ D^{\alpha} f\in \mathcal{H}^{\theta}(\mathbb{R}^n). $    So, for all $ \alpha , |\alpha| \leq m $ and $ \M_k \in C_{0}^{\infty}.$\\
    We have  \begin{align*}
    D^{\alpha}f_k(x) &= D^{\alpha}\int f(t)\M_k (x -t) d\m_\infty(t)\\
    & = \int f(t)D^{\alpha} \M_k(x-t)d\m_\infty(t)\\
    & = \int D^{\alpha} f(t) \M_k(x-t) d\m_\infty(t).
    \end{align*}
     So, $||D^{\alpha}f_k - D^{\alpha} f ||_{H_\theta} = || D^{\alpha} f \ast \M_k- D^{\alpha}f ||_{H_\theta} \to 0 $ as $ k \to \infty.$      Therefore, $ C_{0}^{\infty}(\mathbb{R}^n) $ is dense in $ \mathcal{H}^{\theta}(\mathbb{R}^n, \mu_{\infty}).$
     \end{proof}

     \section*{Acknowledgement} The authors would like to thank the reviewer for reading the manuscript carefully and making      valuable suggestions that significantly improve the presentation of the paper. 


\begin{thebibliography}{00}
\bibitem{MA} M.A-Algwaiz \textit{Theory of Distributions}, Pure and Applied Mathematics, Monograph, Marcel Dekker, Inc (1992). 

\bibitem {AL} A.  Alexiewicz,   { \it Linear functionals on Denjoy-integrable functions},  Colloq. Math.  1(1948) 289--293.
\bibitem{Th} T. K. Donaldson, N. S. Trudinger, Orlicz Sobolev space and Imbedding Theorems, J. Funct. Anal. 8(1971) 52--75. 

\bibitem{JJ} J.J. Duistermaat, J. A. C Kolk \textit{ Distribution: Theory and application}, Berlin Heidelberg, Newyork, Springer (2006).
\bibitem{etaltin} M. Et, Y. Altin, B. Choudhary, B.C. Tripathy, On some classes of sequences  defined by sequences of Orlicz functions, Math. Inequ. \& Appl. 9(2)(2006) 335-–342.

\bibitem{Tl} T. L. Gill, W. W. Zachary \textit{  Functional Analysis and The  Feynman Operator Calculus}, Springer International Publishing Switzerland, (2016). 
\bibitem{Ra}  R. A. Gordon, \textit{The Integrals of Lebesgue, Denjoy, Perron and Henstock}, Graduate Studies in Mathematics Vol 4, Amer. Math. Soc. (1994).


\bibitem{Rh}  R. Henstock \textit{The General Theory of Integration}, Oxford Mathematical Monographs, Clarendo Press, Oxford (1991). 

\bibitem{Rh1}R. Henstock, The equivalence of generalized forms of the Ward, variational, Denjoy–Stieltjes, and Perron–Stieltjes integrals Proc. London Math. Soc.  3(10) (1960)  281-–303.




\bibitem{Ma} M.A. Krasonsel'skii, Ya. B. Rutickii, \textit{ Convex functions and Orlicz Spaces}, P. Noordfoff Ltd, Gronin-gen (1961). 

 \bibitem{Kurzweil} J. Kurzweil, Generalized ordinary differential equations and continuous dependence on a parameter, Czechoslovak. Math. J. 7(82) (1957)  418--446.

\bibitem{Mm} M.M. Rao, Z.D. Ren, Theory of Orlicz Spaces, Vol 146 of Pure and Applied Mathematics, Marcel. Dekker, Inc (1991).

\bibitem{BP} Ben-Zion A. Rubshtein, G. Ya. Grabarnik, M.A. Muratov, Y. S. Pashkova \textit{Foundations of Symmetric Spaces of Measurable Functions}, Developments in Mathematics, Springer, (2016).
\bibitem{nsrnns} N. Subramanian, R. Nallaswamy, N. Saivaraju, Characterization of Entire Sequences via Double Orlicz Space, Inter. J. Math.  Math.  Sci.  2007, Article ID 59681, 10 pages.
\bibitem{Cs} C. Swartz, \textit{Introduction to Gauge Integral}, World Scientific Pub. Co. (2001). 

\bibitem{Bs}  B. S. Thomson, \textit{ Theory of Integral}, Classical Real Analysis. Com. (2008).

\bibitem{Tv}  T. V. Thung,  Some collections of functions Dense in an Orlicz Space, Acta Math. Vietnamica.  25(2)(2000)  195--208.


\bibitem{bcrg} B.C. Tripathy, R. Goswami,  Vector Valued Multiple Sequence Spaces Defined by Orlicz Function, Bol. Soc. Paran. Mat.  33(1)(2015) 69-–81.

\bibitem{lee} L. T. Yeong, {\it Henstock–Kurzweil Integration on Euclidean spaces},  Series in Real Analysis, Vol. 12, World Scientific Publishing (2011).

\end{thebibliography}
\end{document}